\newcommand{\new}[1]{}
\renewcommand{\new}[1]{#1}
\title{\vspace{-1in}\huge
Second-order bounds for the M/M/$s$ queue with random arrival rate}
\author{
\normalsize Wouter J.E.C. van Eekelen \vspace{-1mm}\\ 
\scriptsize Booth School of Business, University of Chicago, \href{w.j.e.c.vaneekelen@tilburguniversity.edu}{Wouter.vanEekelen@chicagobooth.edu}   \vspace{.5mm} \\ 
\normalsize Grani A. Hanasusanto \vspace{-1mm}\\ 
\scriptsize Industrial and Enterprise Systems Engineering, University of Illinois Urbana-Champaign, \href{gah@illinois.edu}{gah@illinois.edu}   \vspace{.5mm} \\ 
\normalsize John J. Hasenbein \vspace{-1mm}\\ 
\scriptsize Operations Research and Industrial Engineering, The University of Texas at Austin, \href{jhas@mail.utexas.edu}{jhas@mail.utexas.edu} \vspace{.5mm} \\
\normalsize Johan S.H. van Leeuwaarden \vspace{-1mm}\\ 
\scriptsize Department of Econometrics and Operations Research, Tilburg University,  \href{j.s.h.vanleeuwaarden@tilburguniversity.edu}{j.s.h.vanleeuwaarden@tilburguniversity.edu}  \vspace{.5mm} \\
} 
\date{\small\mydate{\today}}
\begin{document}

\maketitle

\section{Introduction}
The majority of stochastic models in queueing theory assume known arrival processes, which facilitates exact analysis. In fact, the dominant assumption is that potential customers arrive according to a Poisson process with a known intensity $\lambda$. In contrast, we interpret the arrival rate as an unknown parameter of which partial information is available. 
The parameter $\lambda$ is then replaced by a random variable $\Lambda$ with some distribution representing what is believed or known about the market size. We primarily focus on the situation where we know the mean and variance of $\Lambda$, with the connection to classical queueing theory when the variance is zero and, hence, the random variable $\Lambda$ is known to be $\lambda$ always. 

{We set out to solve an optimization problem for determining tight bounds for the expected waiting time for all distributions of $\Lambda$ that comply with the partial information. }
For mean-variance information, we will show that these tight bounds are attained by two-point distributions, essentially saying that the worst-case market is one where 
the actual market size attains its maximum size or is well below the expected market size. To establish such tight bounds, we invoke primal-dual techniques for solving semi-infinite linear programs. Successfully applying these techniques will prove to depend crucially on the properties of the expected {waiting} time viewed as function of the arrival rate $\lambda$. For the main queueing system in this paper, the M/M/$s$ queue,  we will leverage that the expected {waiting} time is (i) increasing and (ii) convex in $\lambda$, with (iii) the first derivative with respect to (w.r.t) $\lambda$ being a convex function of $\lambda$. All three properties will turn out to be crucial for solving the optimization problems in this paper, and the convex derivative will be particularly useful for the setting with mean-variance information. 

{
We summarize the main contributions in this paper as follows.
For the M/M/$s$ queue with partially known arrival rate, we establish novel tight bounds. When mean-variance-support information is known, we show that the tight bound for the expected \new{waiting} time is attained by a two-point distribution. 
Our proof of these tight bounds combines two results from different areas. The first result stems from the area of optimization and says the semi-infinite linear program that describes the tight bound for the expectation of a function with a convex derivative is attained by a two-point distribution. The second result stems from queueing theory and shows that the expected \new{waiting} time as a function of the arrival rate is convex, and we establish that its derivative is also a convex function of the arrival rate. Combining these two results proves to be an effective strategy for finding distributionally robust bounds for queueing systems subject to parametric uncertainty. 
The tight bounds are leveraged for analysis of unobservable M/M/$s$ queues that cater to rational users who decide to join or balk based on expected utility. We use the wait bounds to bound the equilibrium arrival rate, which in turn, leads to tractable maximin analyses for setting the revenue-maximizing price. In this way, we extend the classical strategic queueing literature with known model parameters as in 
\cite{naor1969regulation,edelson1975congestion} to a setting where the arrival rate is only partially characterized in terms of range, mean, and variance. }




The paper is structured as follows. In Section~\ref{subsec:liter}, we present a brief overview of related literature. Section~\ref{MMSsec2} contains the proof of our main result, which provides bounds for the expected \new{waiting} time under uncertain arrival rates. Section~\ref{MMSsec3} applies these bounds to a rational queueing model. 
Finally, in Section~\ref{MMSsec6}, we conclude and propose \new{some further} research directions.

\subsection{Related literature}\label{subsec:liter}
This research connects four themes in the queueing literature: random arrival rates, second-order bounds, parametric convexity, and rational queueing. 

{\it Queues with random arrival rate}. 
Due to its mathematical tractability, the M/M/$s$ queue is among the cornerstones of queueing theory and is widely applied in service operations management. 
To enhance practical relevance, several studies proposed to relax the assumption of Poisson arrivals, and instead model the arrival rate as a random variable
\citep{avramidis2004modeling,harrison2005method,whitt2006staffing,steckley2009forecast,heemskerk2017scaling,chen2017staffing,zan2018staffing,chen2001two}.
This gives rise to a Poisson mixture model for the arrival process, which can deal with forecast errors, overdispersion (compared with the natural fluctuations of a Poisson process), and unknown market size. The mathematical formulae of the M/M/$s$ model, such as the Erlang-C formula for the delay probability, then still apply once the mixing distribution can be characterized. \citet{jongbloed2001managing} showed how to estimate this mixing distribution based on available data of arrivals. 

{\it  Second-order bounds}. 
For this M/M/$s$ queue with random arrival rate, we derive performance bounds that do not require the full mixture distribution, but only utilize the first two moments of the arrival rate. \new{To derive such second-order bounds,} we need to solve a semi-infinite linear program.  To do so, we first show that the expected stationary waiting time can be written as the expectation of a function of the random arrival rate that has a convex derivative.  We then establish a general result, which provides tight second-order bounds as the solutions to semi-infinite linear programs for the expectation of functions with convex derivatives.
Such semi-infinite linear programs arise naturally in studying second-order bounds, or more generally, moment problems. For example, \citep{birge1991bounding,delage2010distributionally,dokov2005second} use second-moment information to compute tight bounds for distributionally robust stochastic programming. {Second-order bounds also play a predominant role in the theory on performance bounds for queues, which started with the work of \citet{kingman1962some}. Kingman derived bounds for the mean waiting time in the GI/G/1 queue, expressed in terms of the first two moments of the interarrival- and service-time distributions; however, these bounds are not in general tight.}

{\it Convexity and beyond.}
Numerous studies have taken advantage of the insight that moments of the waiting time can be viewed as the expected value of a convex function. \citet{vasicek1977inequality} showed that the variance of waiting time under a general queueing discipline does not exceed that under the LCFS discipline, and \citet{hajek1983proof} established that among all arrival processes for an exponential server queue with specified arrival and service rates, the arrival process which minimizes the expected \new{waiting} time is the process with constant interarrival times. \citet{weber1980note} revealed that the mean queueing time in the G/GI/$m$ queue is nonincreasing and convex in the number of servers, $m$, indicating that marginal analysis is optimal for determining server allocation among various service facilities so as to minimize the total expected queueing time. The proofs in \cite{vasicek1977inequality,hajek1983proof,weber1980note} rely on convexity properties. 
Convexity of queueing performance metrics is a highly desired property in the parametric optimization of stochastic systems. A substantial body of research employs direct algebraic methods to establish convexity of the steady-state performance metrics as functions of the design parameters; see, for example, \cite{grassmann1983convexity,lee1983note,jagers1986continued,harel1987convexity,harel1990convexity2,harel1990convexity,weber1983note}. 
For a comprehensive discussion on the related notion of stochastic convexity, we refer the interested reader to \cite{shaked1988stochastic,shanthikumar1988strong,shaked2007stochastic}. However, as mentioned earlier, for the present study we require convexity of the objective function's derivative. Several closely related concepts emerge in the extremal analysis of queueing systems. In this context, the need for a convex derivative is replaced by closely related sufficient conditions that warrant the application of the theory of complete Tchebychev systems \cite{karlin1966tchebycheff}. For a detailed discussion of the relevance of these systems to queues, please refer to \cite{eckberg1977sharp,chen2020extremaltail,gupta2011markov}.


{\it Rational queueing with limited information.}
A mature literature exists \citep{hassin2016rational} that seeks to elucidate the rational decision-making processes of users who decide whether or not to access delay-sensitive services based on utility. \citet{naor1969regulation} initiated this line of work for the observable M/M/1 queue with a customer utility that depends linearly on the price, value of service, and the expected \new{waiting} time. \citet{edelson1975congestion} investigated similar questions for the unobservable M/M/1 \new{queue}. These works explore the environment with a deterministic arrival rate. \citet{liu2019naor} expanded upon Naor's model by assuming that the arrival rate is drawn from a known probability distribution accessible to the decision maker. This concept was further extended to the unobservable setting in \cite{chen2020knowledge}. Their research demonstrated that a socially optimal pricing strategy results in a lower expected arrival rate compared to a price set by a revenue-maximizing decision maker. \citet{hassin2021strategic} examined the unobservable model with a random arrival rate from a distinct angle, where strategic customers base their decisions on a rate-biased time-average property when estimating their expected waiting time. \citet{wang2022distributionally} extended the traditional observable model to a distributionally robust setting by taking into account an uncertain arrival rate governed by an unknown underlying probability distribution.

\section{Tight bounds for expected waiting time with limited market knowledge}\label{MMSsec2}
In this section, we derive bounds for the expected \new{waiting} time with an uncertain arrival rate. Section~\ref{subsec:themodelMMS} introduces \new{the basic problem of finding a tight bound for the expected waiting time.} Section~\ref{subsec:forgenericfunc} \new{states} the second-order bounds for general convex functions with a convex derivative. In Section~\ref{subsec:mainlambdaresult}, we apply these second-order bounds to the expected \new{waiting} time in the M/M/$s$ queue. Section~\ref{subsecUnimod} incorporates unimodality information to refine the \new{second-order bounds}. In Section~\ref{sec:furtherapplic}, we explore the extension wherein the service rate parameter is likewise a random parameter. Further, \new{as an example of the broader applicability of our approach}, we \new{obtain} second-order bounds for the expected waiting time in the M/G/1 queue with random arrival rate. 


\subsection{The model}\label{subsec:themodelMMS}
Consider an M/M/$s$ queue with Poisson arrivals with rate $\lambda$, unit mean exponential service requirements, and $s$ parallel servers, each operating with service rate $\mu$. Let $W(s,\lambda)$ denote the expected waiting time experienced by a customer in this queueing system (including the time in service), which depends on the number of servers $s$ and the arrival rate $\lambda$. For notational convenience, we omit at first the functional dependence on the service rate parameter $\mu$, but our findings remain valid for any given value of this parameter. Suppose that $\lambda/\mu<s$. Then the single-server M/M/1 queue has as expected waiting time $W(1,\lambda)=(\mu-\lambda)^{-1}$. Observe that
$$
\frac{\partial}{\partial \lambda} W(1,\lambda) = \frac{1}{(\mu-\lambda)^2} > 0,\quad  \frac{\partial^2}{\partial \lambda^2} W(1,\lambda) = \frac{2}{(\mu-\lambda)^3} > 0
$$
and
$$
\frac{\partial^3}{\partial \lambda^3} W(1,\lambda) = \frac{6}{(\mu-\lambda)^4} > 0, \quad \text{for } \lambda<\mu.
$$
Hence, $\lambda\mapsto W(1,\lambda)$ is an increasing, convex function of $\lambda$ with a convex derivative. We now present some general results for functions $\phi$ that have the same properties as $W(1, \cdot)$. We present the results in general form, because we will apply these results later to $W(s,\lambda)$ with $s\geq 2$ as well, to obtain the solutions for
 \begin{align}\label{eq:themainmaxproblem}
 \max_{\mathbb{P}\in \mathcal{P}_{(m,\sigma)}} \mathbb{E}_\mathbb{P}[W(s,\Lambda)], 
\end{align}
in which we focus on the case $\cP=\mathcal{P}_{(m,\sigma)}$ with the latter being shorthand notation for $\mathcal{P}(m,\sigma,\llambda,\ulambda)$, the set of all distributions with mean $m$, standard deviation $\sigma$ and the support contained in the interval $[\llambda,\ulambda]$. That is,
$$
\mathcal{P}(m,\sigma,\llambda,\ulambda):= \left\{\P\in\cP_0([\llambda,\ulambda]) : \E_\P[\Lambda]=m,\ \E_\P[\Lambda^2]=\sigma^2+m^2 \right\},
$$
where $\cP_0([\llambda,\ulambda])$ comprises all probability distributions with support contained in $[\llambda,\ulambda]$. We shall refer to such a set as an ambiguity set. Henceforward, the market size is viewed as a random variable with a distribution contained in $\mathcal{P}(m,\sigma,\llambda,\ulambda)$. We further assume from this point onward that the system under consideration is stable for each realization of the random arrival rate; that is, $\ulambda<s\mu$.

 
\subsection{Bounding the expectation of a function with convex derivative}\label{subsec:forgenericfunc}
\new{We first develop a general bound for $\int_{x} \phi(x) \mathrm{d}\P(x)$ with $\phi(\cdot)$ being a function with a convex derivative, just like the function $W(s,\cdot)$.
We therefore solve $\max_{\P\in\cP_{(m,\sigma)}}\E_\P[\phi(X)]$ where the constraints that define $\cP_{(m,\sigma)}$ correspond to known first and second moments and support contained in $[\lx,\ux]$. We thus need to solve}
\begin{equation}\label{eq:ThePrimal}
\begin{aligned}
 \max_{\P \in \cP_0([\lx,\ux])} & \int_{x} \phi(x) \mathrm{d}\P(x) \\
\textup{s.t.}\quad & \int_{x} x \mathrm{d}\P(x)=m, \\
& \int_{x} x^2 \mathrm{d}\P(x)=\sigma^2 + m^2,
\end{aligned}
\end{equation}
 a semi-infinite linear program with an (infinite-dimensional) decision variable, the probability distribution $\P$, which must reside within $\cP_{(m,\sigma)}$.
{Since there are three constraints (including  $\smallint_{x} \mathrm{d}\P(x)=1$), it is known from general optimization theory (see, e.g.,~\cite[Theorem~1]{rogosinski1958moments}, \cite[Lemma~3.1]{shapiro2001duality}, \cite{smith1995generalized}) that among the optimal solutions of  \eqref{eq:ThePrimal} there is a distribution with at most three points of support. 
We next prove a stronger result which says that among the optimal solutions to \eqref{eq:ThePrimal}, there is a distribution with two points.
The proof of this result uses the dual problem of 
\eqref{eq:themainmaxproblem}, defined as finding the vector of dual variables $(\pi_0, \pi_1, \pi_2)$ that solves
\begin{equation}\label{eq:TheDual}
\begin{aligned}
 \min_{\pi_0, \pi_1, \pi_2} & \pi_0+\pi_1 m+\pi_2 (\sigma^2+m^2) \\
\textup{s.t.}\quad & M(x):=\pi_0+\pi_1 x+\pi_2 x^2 \geqslant \phi(x), \quad \forall x \in [\lx,\ux], \\
\end{aligned}
\end{equation}
and uses that the objective function $\phi(\cdot)$ has a convex derivative. Denote the optimal dual solution as $(\pi^*_0, \pi^*_1, \pi^*_2)$.

\begin{lemma}\label{lemma:cs}
Suppose that $m\in(\lx,\ux)$ and $\sigma^2\in(0,(\ux-m)(m-\lx))$. The probability distribution that solves 
\eqref{eq:ThePrimal} is supported on the points at which the function $M^*(x)=\pi^*_0+\pi^*_1 x+\pi^*_2 x^2$ coincides with $\phi(x)$.
\end{lemma}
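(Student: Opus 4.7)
The plan is to derive the conclusion as a complementary-slackness consequence of strong duality for the semi-infinite linear program \eqref{eq:ThePrimal} and its dual \eqref{eq:TheDual}. The first step is to verify strong duality. The hypotheses $m\in(\lx,\ux)$ and $\sigma^2\in(0,(\ux-m)(m-\lx))$ place the moment vector $(m,\sigma^2+m^2)$ in the relative interior of the moment cone of probability measures on $[\lx,\ux]$; in particular, there exist distributions in $\cP_{(m,\sigma)}$ whose support contains three or more points, so the strict-feasibility/Slater condition is satisfied. This is exactly the condition invoked in the moment-duality references already cited in the excerpt (e.g.~\cite{shapiro2001duality,smith1995generalized}), and it yields (i) equality of the primal and dual optimal values, (ii) primal attainment by some $\P^\ast$ (which also follows from weak compactness of $\cP_{(m,\sigma)}$ on the compact support $[\lx,\ux]$ together with continuity of $\phi$ there), and (iii) dual attainment by the solution $(\pi_0^\ast,\pi_1^\ast,\pi_2^\ast)$ fixed in the statement.

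The second step is complementary slackness. Using primal feasibility of $\P^\ast$, the dual objective can be rewritten as
\[
\pi_0^\ast+\pi_1^\ast m+\pi_2^\ast(\sigma^2+m^2)\;=\;\int_{[\lx,\ux]} M^\ast(x)\,\mathrm{d}\P^\ast(x),
\]
so equality of the primal and dual optima forces
\[
\int_{[\lx,\ux]}\bigl(M^\ast(x)-\phi(x)\bigr)\,\mathrm{d}\P^\ast(x)\;=\;0.
\]
Dual feasibility gives $M^\ast(x)-\phi(x)\geq 0$ pointwise on $[\lx,\ux]$, so a nonnegative integrand whose integral against the probability measure $\P^\ast$ vanishes must be zero $\P^\ast$-almost surely. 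Consequently every primal optimum $\P^\ast$ is supported on the contact set $\{x\in[\lx,\ux]:M^\ast(x)=\phi(x)\}$, which is the claim.

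The only delicate ingredient is the invocation of strong duality for an infinite-dimensional LP, which in general can fail; here, however, the hypotheses on $m$ and $\sigma^2$ supply the required Slater/interior-point condition, so strong duality is inherited from the cited moment-problem results. Everything else is routine algebra plus the elementary fact that a nonnegative function with zero integral vanishes almost everywhere. It is worth noting that this lemma does not yet exploit that $\phi$ has a convex derivative; that property enters at the next step, where the geometry of the difference $M^\ast-\phi$ will be used to argue that the contact set consists of at most two points, thereby upgrading the lemma to the announced two-point support result.
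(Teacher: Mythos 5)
Your proposal is correct and follows essentially the same route as the paper: both establish strong duality and primal/dual attainment from the interior-moment-vector (Slater-type) condition via the cited moment-duality results, and then conclude via complementary slackness. The only difference is cosmetic --- you write out the complementary-slackness integral argument $\int(M^\ast-\phi)\,\mathrm{d}\P^\ast=0$ explicitly, whereas the paper simply cites \cite[Proposition~2.1]{shapiro2001duality} for that step.
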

\begin{proof}
According to Proposition~3.4 in \cite{shapiro2001duality}, problems \eqref{eq:ThePrimal} and \eqref{eq:TheDual} have 
the same optimal value when
the moment vector $(m,\sigma)$ is an interior point of the set of feasible moment vectors, which is implied by the stated conditions $m\in(\lx,\ux)$ and $\sigma^2\in(0,(\ux-m)(m-\lx))$.  Furthermore, \cite[Proposition~3.4]{shapiro2001duality} asserts that, in this case, the optimal dual solution is attained if the common optimal value is finite. Since $\phi(x)$ is continuous on the compact support $[\lx,\ux]$, it readily follows that the optimal value is indeed bounded. Hence, the dual optimal solution is attained. Further, \cite[Corollary 3.1]{shapiro2001duality} says that if the support is compact, the moment functions and the objective function are continuous, and the primal problem is consistent (i.e., the ambiguity set $\cP_{(\mu,\sigma)}$ is nonempty), then there is no duality gap and the optimal primal solution is attained. Under the stated conditions on the moment vector $(\mu,\sigma)$, the primal problem is indeed guaranteed to admit a feasible distribution. Thus, it immediately follows from the continuity of $x,$ $x^2,$ and $\phi(x),$ that the primal optimal solution is also attained.
Therefore, the optimal solutions of both \eqref{eq:ThePrimal} and \eqref{eq:TheDual} are attained, and there is no duality gap. This, in turn, implies that complementary slackness holds \cite[Proposition~2.1]{shapiro2001duality} and completes the proof.
\end{proof}

\begin{proposition}[Extremal two-point distributions]\label{propbirge}
Let $x\mapsto \phi(x)$ be a continuously differentiable function with its derivative $\phi'(\cdot)$ being either convex or concave on the bounded interval $[\lx,\ux]$. Suppose that $m\in(\lx,\ux)$ and $\sigma^2\in(0,(\ux-m)(m-\lx))$. Then for a random variable $X$ with distribution $\P\in\mathcal{P}(m,\sigma,\lx,\ux)$, the tight upper bound of $\E_\P[\phi(X)]$ is attained by a discrete distribution with two support points.  
\end{proposition}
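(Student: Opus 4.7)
The plan is to apply Lemma~\ref{lemma:cs}, which places the support of every primal optimizer of \eqref{eq:ThePrimal} inside the contact set $Z := \{x \in [\lx,\ux] : M^*(x) = \phi(x)\}$, and then to show that $Z$ is structured enough that a two-point distribution in $\mathcal{P}(m,\sigma,\lx,\ux)$ always fits inside it. Writing $h := M^* - \phi \geq 0$ on $[\lx,\ux]$, so that $Z$ is its zero set, the crux is that $h'$ inherits concavity from $\phi'$: since $(M^*)'(x) = \pi_1^* + 2\pi_2^* x$ is affine and $\phi'$ is convex, $h'$ is concave on $[\lx,\ux]$ (the concave $\phi'$ case is handled symmetrically, yielding a convex $h'$).

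The structural core of the argument will be the dichotomy that either $|Z| \leq 2$ or $h \equiv 0$ on a non-trivial subinterval. To establish this, I would suppose three distinct points $x_1 < x_2 < x_3$ lie in $Z$. Since $x_2$ is interior and $h \geq 0$ makes any interior zero a local minimum, $h'(x_2)=0$; applying Rolle's theorem to $h$ between consecutive zeros supplies further zeros of $h'$, and a short case analysis according to whether $x_1, x_3$ coincide with the endpoints $\lx, \ux$ yields at least three distinct zeros of $h'$ in $[x_1, x_3]$. A concave function with three distinct zeros $a < b < c$ is forced to vanish on $[a,c]$: concavity places its graph above the chord through $(a,0)$ and $(c,0)$, so $h' \geq 0$ there, while expressing $b$ as a convex combination of an arbitrary $x \in (a,b)$ and $c$ flips the inequality to $h'(x) \leq 0$, giving $h' \equiv 0$ on $[a,c]$. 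Hence $h$ is constant on a subinterval through $x_2$, and $h(x_2) = 0$ then forces $h \equiv 0$ there.

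To conclude, I exhibit an optimal two-point distribution in each alternative. If $|Z| \leq 2$, the hypothesis $\sigma^2 > 0$ rules out $|Z| \leq 1$, so $|Z| = 2$ and the primal optimizer from Lemma~\ref{lemma:cs} is itself a two-point distribution. If instead $h$ vanishes on an interval, one iterates the concavity argument---using that a concave function with a zero plateau is forced to be nonpositive to one side of that plateau---to show that $Z$ is in fact a closed subinterval of $[\lx,\ux]$. On this subinterval $\phi = M^*$ is quadratic, so $\mathbb{E}_{\P}[\phi(X)]$ depends only on the first two moments of $\P$ for any $\P$ supported on $Z$; since Lemma~\ref{lemma:cs} already guarantees a feasible distribution in $\mathcal{P}(m,\sigma,\lx,\ux)$ supported in $Z$, the moment problem on $Z$ is feasible, and the classical Chebyshev-type construction produces a two-point distribution on $Z$ with the prescribed mean $m$ and variance $\sigma^2$, which is optimal. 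The main obstacle is this degenerate regime, where Lemma~\ref{lemma:cs} alone does not force a two-point support and the explicit moment-matching step on $Z$ is needed to place the two atoms inside the contact set.
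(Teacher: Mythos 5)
Your proof is correct and follows essentially the same primal--dual route as the paper: both invoke Lemma~\ref{lemma:cs} to confine the optimizer's support to the contact set of the optimal dual quadratic $M^*$ with $\phi$, and both exploit that $(M^*)'$ is affine while $\phi'$ is convex to show this contact set cannot consist of three isolated points. Where you differ is in execution: the paper argues geometrically about tangency points and endpoint intersections, whereas you run the argument analytically through $h=M^*-\phi$, Rolle's theorem, and the fact that a concave $h'$ with three zeros must vanish identically between the outer two. Your treatment of the degenerate case (where $\phi'$ is not strictly convex and the contact set is an interval) is actually more complete than the paper's: the paper merely remarks that the resulting continuum of supports ``subsumes'' a two-point choice, while you explicitly show the contact set is a closed subinterval on which $\phi$ is quadratic, and then produce a two-point distribution with the prescribed mean and variance inside it via feasibility of the restricted moment problem --- a step worth making precise.
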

\begin{proof}

It is evident that the optimal distribution cannot be a one-point distribution, since $\sigma>0$, so that $M(\cdot)$ is bound to intersect with $\phi(\cdot)$ more than once by Lemma~\ref{lemma:cs}. We first consider the case with $\phi'(\cdot)$ being either strictly convex or strictly concave. It suffices to demonstrate that a candidate dual function $M^*$ coincides with $\phi$ at precisely two points, rather than three. 
Figure~\ref{figvarderiv} illustrates the functions $M$ and $\phi$, and their derivatives.

\begin{figure}[h]
\begin{subfigure}{.49\linewidth}
\centering
\includegraphics[width=.75\linewidth]{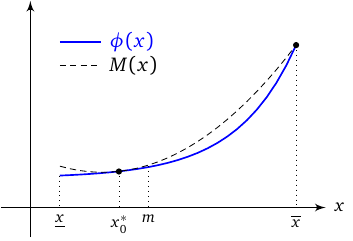}
\caption{}\label{figvarderiva}
\end{subfigure}
\begin{subfigure}{.49\linewidth}
\centering
\includegraphics[width=.75\linewidth]{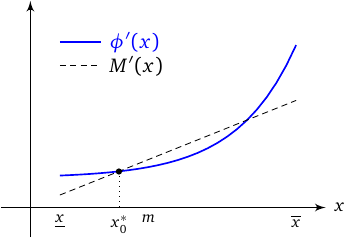}
\caption{}\label{figvarderivb}
\end{subfigure}
\caption{Objective function $\phi$, dual function $M$ and their derivatives}
\label{figvarderiv}
\end{figure}

We proceed by contradiction. Suppose that there exists a feasible $M(\cdot)$ such that it coincides with $\phi(\cdot)$ at three points.
Let the first intersection point of the derivatives in Figure~\ref{figvarderivb} correspond to a tangent point for the original functions. Due to dual feasibility, $M'(\cdot)$ has to be greater than $\phi'(\cdot)$ directly after the tangent point, and $M(\cdot)$ can only be tangent to $\phi(\cdot)$ once since $M'(\cdot)$ is linear and $\phi'(\cdot)$ is strictly convex. Hence, again by dual feasibility, there cannot exist a second intersection point in the interior of $[\lx,\ux]$. Therefore, a candidate dual solution that coincides with $\phi(\cdot)$ thrice should intersect once in the interval $(\lx,\ux)$ and at both boundary points. Let $M_{x^*_0}(\cdot)$ be such that it is tangent to $\phi(\cdot)$ at some $x_0^*$ and intersects at $\ux$. Assume now that $M_{x^*_0}(\cdot)$ also touches $\phi(\cdot)$ at $x=\lx$. This implies that $M_{x^*_0}'(\lx)\geq\phi'(\lx)$ for $M_{x^*_0}$ to be dual feasible. However, as shown in Figure~\ref{figvarderiv}, this cannot hold, for $\phi'$ is strictly convex. Hence, we arrive at a contradiction. Likewise, for strictly concave $\phi'(\cdot)$, we construct $M_{x^*_0}(\cdot)$ such that it touches at $\lx$, and it is easily shown that imposing this function to intersect also at $\ux$ leads to a similar contradiction.

To complete the proof, we must still consider the case where $\phi'(\cdot)$ is either convex or concave, but not strictly so. This implies that $\phi'(\cdot)$ is linear over certain intervals of its domain. Suppose we are provided with a dual feasible function $M_{x_0}$. 
If $M'_{x_0}$ does not align with $\phi'$ on any of these intervals, the preceding argument remains valid. However, if $M'_{x_0}$ coincides with $\phi'$ on such an interval, $M_{x_0}$ and $\phi$ must match over the entire interval $[x_0-\delta, \ux]$, for some $\delta\geq0$, by dual feasibility. This condition results in a continuum of potential choices for the support of the extremal distribution, as dictated by complementary slackness. Nevertheless, this subsumes the situation of a two-point support. Therefore, the assertion presented in the claim holds true. An analogous argument applies for a concave $\phi'(\cdot)$, completing the proof of the claim.  
\end{proof}


Proposition~\ref{propbirge} partly overlaps with  
Theorem 5.1 in \cite{birge1991bounding}. Proposition~\ref{propbirge} stretches the conditions in
\cite[Theorem 5.1]{birge1991bounding}, as \cite[Theorem 5.1]{birge1991bounding} requires $\phi(\cdot)$ to be convex. 
On the other hand, \cite[Theorem 5.1]{birge1991bounding} allows the derivative $\phi'(x)$ to be convex on the interval $[\lx,c]$ and concave on $[c,\ux]$ for some $c\in(\lx,\ux)$. For the purpose of this paper, \cite[Theorem 5.1]{birge1991bounding} would have sufficed, but we present Proposition~\ref{propbirge} as a slight generalization and to provide a self-contained exposition. 


Proposition~\ref{propbirge} defines a class of objective functions for which the mean-variance information set leads to two-point worst-case distributions. The defining feature of this class of functions is the convex derivative. Based on general optimization theory, we knew already that three points suffice for describing the worst-case distribution, but with Proposition~\ref{propbirge} this is reduced to two points. }
It is thus enough to optimize over two-point distributions in the search for the extremal distribution that attains the tight bounds for $\E_\P[\phi(X)]$. 
{\citet[Chapter~4]{kreuin1977markov} demonstrate that, when $\phi$ is continuously differentiable with a strictly convex derivative on $[\lx,\ux]$, a unique two-point distribution solves the moment problem \eqref{eq:ThePrimal}. The following lemma establishes a similar result.}

\begin{lemma}\label{lemma:ec1}
Consider a function $x\mapsto \phi(x)$ that is continuously differentiable on the bounded interval $[\lx,\ux]$, and let $X$ be a random variable with distribution $\P\in\mathcal{P}(m,\sigma,\lx,\ux)$. \\
{\rm (i)} If the derivative $\phi'(x)$ is convex on $[\lx,\ux]$, the tight upper bound for $\E_\P[\phi(X)]$ is attained by a two-point distribution with values
$
\{m-\frac{\sigma^2}{\ux-m},\ux\}
$ and corresponding probabilities 
$
\{\frac{(\ux-m)^2}{(\ux-m)^2+\sigma^2},\frac{\sigma^2}{(\ux-m)^2+\sigma^2}\}
$.\\
{\rm (ii)} If the derivative $\phi'(x)$ is concave on $[\lx,\ux]$, the tight upper bound for $\E_\P[\phi(X)]$ is attained by a two-point distribution with values
$
\{\lx,  m + {\frac{\sigma^2}{m-\lx}}\}
$
with corresponding probabilities 
$
\{\frac{\sigma^2}{(m-\lx)^2+\sigma^2},\frac{(m-\lx)^2}{(m-\lx)^2+\sigma^2}\}
$.
\end{lemma}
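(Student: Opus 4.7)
The plan is to build directly on Proposition~\ref{propbirge}, which already guarantees that the tight upper bound is attained by a two-point distribution; the remaining task is to identify which two points form its support.

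First, I would record a short moment identity for two-point distributions: if $X$ is supported on $\{a,b\}$ with $a<b$ and mass $p$ at $a$, the constraints $\E[X]=m$ and $\mathrm{Var}(X)=\sigma^2$ translate to $p=(b-m)/(b-a)$ and $\sigma^2=(b-m)(m-a)$. Fixing one support point thus pins down both the other point and the probabilities; specifically, $b=\ux$ produces exactly the values claimed in part~(i), while $a=\lx$ produces exactly the values claimed in part~(ii). So the entire problem reduces to deciding which boundary point belongs to the optimal support.

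The main step is to show that for convex $\phi'$ the extremal support necessarily contains $\ux$ (and symmetrically for concave $\phi'$). For this I would use the dual feasibility certificate from Lemma~\ref{lemma:cs}: setting $h(x)=M^*(x)-\phi(x)$, the support of the extremal distribution equals the zero set of $h$, and $h\ge 0$ on $[\lx,\ux]$. Since $M^{*\prime}$ is affine and $\phi'$ is convex, $h'$ is concave and hence has at most two zeros. Suppose, for contradiction, that the extremal support is $\{\lx,a\}$ with $a\in(\lx,\ux)$. Then $a$ is an interior minimum of $h$, so $h'(a)=0$, and the boundary conditions $h(\lx)=h(a)=0$ together with $h\ge 0$ force a local maximum of $h$ in $(\lx,a)$, yielding a second zero of $h'$ at some $c\in(\lx,a)$. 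In the strictly convex case these exhaust the zeros of the strictly concave $h'$, so $h'<0$ on $[\lx,c)$; but then $h$ decreases strictly from $h(\lx)=0$, contradicting $h\ge 0$. The alternative support $\{a,\ux\}$ avoids this obstruction (its two zeros of $h'$ lie in $[a,\ux]$, leaving $h'<0$ on $[\lx,a)$ compatible with $h>0$ there), so by elimination the extremal support must be $\{a,\ux\}$, and the moment identity above yields the announced values. Part~(ii) is handled by the mirror argument: with concave $\phi'$, $h'$ is convex with at most two zeros, and an analogous case analysis rules out support $\{a,\ux\}$ and forces $\{\lx,b\}$.

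The main obstacle I anticipate is the non-strict case, where $\phi'$ has an affine piece and $h'$ may vanish on a whole subinterval of $[\lx,\ux]$. As already noted in the proof of Proposition~\ref{propbirge}, this generates a continuum of optimal two-point distributions via complementary slackness, and one must verify that the specific distribution stated in the lemma is among them. A clean remedy is a perturbation argument: approximate $\phi$ by $\phi_{\varepsilon}(x)=\phi(x)+\varepsilon\, x^3/3$, so that $\phi'_{\varepsilon}$ becomes strictly convex; apply the strict case above, then let $\varepsilon\downarrow 0$ and invoke continuity of the objective in $\phi$ on the compact support $[\lx,\ux]$ together with compactness of the ambiguity set in the weak topology.
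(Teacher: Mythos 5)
Your proposal is correct in substance but follows a genuinely different route from the paper. The paper, after invoking Proposition~\ref{propbirge} to reduce to two-point distributions, parametrizes the entire feasible family by the mass $\alpha$ placed on the upper support point, computes $\frac{\d}{\d\alpha}\Phi(\alpha)=\phi(x_2)-\phi(x_1)-(x_2-x_1)\frac{\phi'(x_1)+\phi'(x_2)}{2}$, and observes that this is nonpositive by the Hermite--Hadamard-type inequality for the convex function $\phi'$; monotonicity of $\Phi$ then pushes the optimizer to the endpoint of the $\alpha$-range, which is exactly the distribution with $\ux$ in its support, and the reverse monotonicity gives part~(ii) in the same sweep. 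You instead re-enter the dual picture of Lemma~\ref{lemma:cs} and argue via the zero set of $h=M^*-\phi$ and the concavity of $h'$, which is essentially an extension of the tangency geometry already used to prove Proposition~\ref{propbirge}. Your argument works, with two caveats. First, complementary slackness gives that the support is \emph{contained in} the coincidence set of $M^*$ and $\phi$, not equal to it; this is the direction you actually use, so no harm, but the phrasing should be fixed. Second, your ``by elimination'' step skips the configuration where both support points are interior; it is disposed of instantly by your own observation that a strictly concave $h'$ has at most two zeros (two interior minima of $h$ plus an intermediate maximum would force three), but it must be said, as must the degenerate candidate $\{\lx,\ux\}$, which is infeasible whenever $\sigma^2<(\ux-m)(m-\lx)$. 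What each approach buys: yours recycles the duality machinery and makes the role of the quadratic certificate transparent, whereas the paper's computation is more elementary once the two-point reduction is in hand, treats (i) and (ii) simultaneously, and---importantly---requires no strictness, so the paper avoids entirely the perturbation-and-limit step ($\phi_\varepsilon=\phi+\varepsilon x^3/3$) that you need for the non-strict case. That perturbation step is itself sound, since $|\phi_\varepsilon-\phi|$ is uniformly small on the compact support and the extremal distribution of the strict case does not depend on $\varepsilon$.
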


\begin{proof}
A two-point distribution with mean $m$ and variance $\sigma^2$ has support
\begin{equation}\label{eq:supp}
x_1 = m - \sqrt{\frac{\alpha}{1-\alpha}}\sigma, \quad x_2 = m + \sqrt{\frac{1-\alpha}{\alpha}}\sigma
\end{equation}
with probabilities $\alpha,\ 1-\alpha$, respectively.
We thus need to solve
$
\max_{\alpha} \Phi(\alpha)
$
with
$$
\Phi(\alpha):=\alpha\phi\left(m + \sqrt{\frac{1-\alpha}{\alpha}}\sigma\right) + (1-\alpha) \phi\left(m - \sqrt{\frac{\alpha}{1-\alpha}}\sigma\right)
$$
and
$$
\alpha\in\Big[\frac{\sigma^2}{(\ux-m)^2+\sigma^2}, \frac{(m-\lx)^2}{(m-\lx)^2+\sigma^2}\Big]
$$
due to the support being contained in  $[\lx,\ux]$. 
 To show that $\alpha^*=\frac{\sigma^2}{(\ux-m)^2+\sigma^2}$ is a maximizer, we will prove that $\Phi(\alpha)$ is nonincreasing in $\alpha$. For differentiable $\phi$, we have that
\begin{equation}\label{eq:optcond}
\frac{{\rm d}}{{\rm d}\alpha}\Phi(\alpha) = \phi(x_2) - \phi(x_1) - (x_2-x_1)\frac{\phi'(x_1)+\phi'(x_2)}{2}.
\end{equation}
In order for $\Phi(\alpha)$ to be nonincreasing, we need
\begin{align*}
    \phi(x_2) - \phi(x_1) - (x_2-x_1)\frac{\phi'(x_2)+\phi'(x_1)}{2} \leq 0,
\end{align*}
or
$$
\frac{\phi'(x_2)+\phi'(x_1)}{2} \geq \frac{\phi(x_2) - \phi(x_1)}{x_2-x_1}.
$$
This inequality holds by convexity of $\phi'$.
Thus, as $\Phi(\alpha)$ is nonincreasing in $\alpha$, the maximum occurs at the lower bound of the feasible set (i.e., $\alpha^*=\frac{\sigma^2}{(\ux-m)^2+\sigma^2}$). Substituting the optimal value $\alpha^*$ into our parameterized expression for the two-point distribution then yields assertion (i).

For assertion (ii), note that for concave $\phi'$, $\Phi(\alpha)$ is nondecreasing in $\alpha$ and therefore maximized by $\alpha^*=\frac{(m-\lx)^2}{(m-\lx)^2+\sigma^2}$.
\end{proof}

Notice that assertion (ii) yields the extremal distribution that solves $\min_{\P\in\cP_{(m,\sigma)}}\E_\P[\phi(X)]$, since determining the lower bound is tantamount to maximizing $\E_\P[-\phi(X)]$.
Returning to the expected waiting time in the M/M/1 queue, we see that the tight bounds for
$\mathbb{E}_\mathbb{P}[ W(1,\Lambda)]$ are attained by the two-point distributions stated in Lemma~\ref{lemma:ec1}. We shall prove this result for the multi-server system in the next subsection.

\new{Let us mention two other streams of literature that also deal with tight bounds for stochastic models, and in particular three- and two-point worst-case distributions. 
The first stream stems from statistics. 
\cite{guljavs1998jensen} presents general results for the maximization of integrals over distributions of which only the 
first two moments are given, and where for convex integrands the maximizing distribution turns out to be a three-point distribution. Among the many statistical applications of the results in \cite{guljavs1998jensen}  are bounding the expected sample maximum and the expected sample range, where for the latter, \cite{hartly1954universal} claims that the extremal distribution can be reduced to a two-point distribution. The other line of research in which three- and two-point extremal distributions feature prominently deals with establishing tight bounds for the waiting time moments in the general GI/G/1 queue, or for more specific queues such as M/G/1 and G/M/1 queues; see \cite{whitt1984approximations,eckberg1977sharp,rolski1972some,pearce1995integral,van2022mad} among others.
In particular, a long-standing conjecture for the expected waiting time in GI/G/1 queue says that the tight bounds follow from the two-point distributions stated in Lemma~\ref{lemma:ec1}. While recently some progress has been made, this conjecture is still open \cite{chen2022correction,chen2021extremal}.}

\subsection{Bounding the expected waiting time as function of the arrival rate}\label{subsec:mainlambdaresult}

To apply the results in the previous section to the multi-server M/M/$s$ queue with random arrival rate, we must show that the expected waiting time, as a function of the arrival rate, has a convex derivative. Therefore, we need to study the third derivative of $W(s,\lambda)$ with respect to $\lambda$. The expected waiting time is a classic formula in the queueing literature, for which a host of properties have been discovered, including convexity with respect to $\lambda$; see, e.g., \cite{grassmann1983convexity,lee1983note}. 
But to the best of our knowledge, the property of a convex derivative with respect to $\lambda$ has not been reported. However, this property was reported in \cite{randhawa2016optimality} for the expected queue size defined as
\begin{equation}\label{eq:L0thder}
L(s,\lambda) := s\rho + \frac{\rho}{1-\rho} C(s,\lambda)
\end{equation}
where $C(s,\lambda)$ is the Erlang-C formula, the probability that an arriving customer experiences delay in the M/M/$s$ queue, and $\rho=\lambda/(s\mu)$. 
\citet{randhawa2016optimality}  proved that $L(s,\lambda)$ as function of the arrival rate $\lambda$, indeed has a convex derivative.
By Little's law, $W(s,\lambda)=L(s,\lambda)/\lambda$, and one could try to see whether the convex derivative of
$L(s,\lambda)$  implies the same property for 
$W(s,\lambda)$. Unfortunately, we do not see a direct proof for this argument, in which we can use convexity of the derivative of the expected queue length $L$ to demonstrate convexity of the derivative of the expected waiting time $W$.
We therefore choose to provide a standalone proof for the fact that $W(s,\lambda)$ has a convex derivative, just like $L(s,\lambda)$. 
The proof method we apply is largely inspired by that in 
\cite{randhawa2016optimality}. In the following, we provide a proof sketch. The details are relegated to the Appendix. First, we determine an expression for the third derivative of the expected waiting time as a function of the server utilization $\rho$. Then, we write this expression as a function of the Erlang-C formula $C$ and the number of servers $s$. It then suffices to show that this expression is nonnegative for all values of $s$ and $\forall\rho\in(0,1)$, which implies convexity of $W(s,\lambda)$ as a function of $\lambda$ with the number of servers $s$ fixed.



\begin{lemma}[Derivative w.r.t. $\lambda$ is convex]
The expected waiting time $W(s,\lambda)$ as function of $\lambda\in(0,\mu)$
has a convex derivative.  
\end{lemma}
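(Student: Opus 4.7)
The plan is to reduce the claim to a single-variable nonnegativity statement about a rational function of $\rho \in (0,1)$ parameterized by $s \geq 1$, and then verify it algebraically in the spirit of \cite{randhawa2016optimality}.

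First, I would start from the standard expression $W(s,\lambda) = 1/\mu + C(s,\rho)/(s\mu(1-\rho))$ with $\rho = \lambda/(s\mu)$ and $C$ the Erlang-C formula. Since $\partial\rho/\partial\lambda = 1/(s\mu)$, the chain rule yields $\partial^3 W/\partial\lambda^3 = (s\mu)^{-4}\cdot (d^3/d\rho^3)[C(s,\rho)/(1-\rho)]$, so the lemma reduces to the inequality
\[
\frac{d^3}{d\rho^3}\left[\frac{C(s,\rho)}{1-\rho}\right] \;\geq\; 0 \qquad \text{for all } \rho \in (0,1) \text{ and all integers } s \geq 1.
\]
The reason one cannot invoke the convex-derivative result for $L(s,\lambda)$ from \cite{randhawa2016optimality} directly is that the same reduction applied to $L$ produces $\rho C(s,\rho)/(1-\rho)$, and translating back from $L$ to $W = L/\lambda$ via Little's law introduces cross terms of uncontrolled sign; this is exactly the obstruction flagged in the sketch just above.

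Second, substituting the explicit rational form $C(s,\rho) = \big[(s\rho)^s/s!\big]\big/\big[(1-\rho)\sum_{k=0}^{s-1}(s\rho)^k/k! + (s\rho)^s/s!\big]$, I would regard $g_s(\rho) := C(s,\rho)/(1-\rho)$ as a rational function and aim to write $g_s'''(\rho) = N_s(\rho)/D_s(\rho)$ with $D_s(\rho) > 0$ on $(0,1)$, so the task collapses to showing $N_s(\rho) \geq 0$. Following the strategy of \cite{randhawa2016optimality}, I would repeatedly apply the standard first-order identity expressing $\partial C/\partial\rho$ in terms of $C$, $1-C$, and polynomial factors in $\rho$ and $s$, to compress $g_s''$ and then $g_s'''$ into a compact form involving only $C$, $1-C$, $\rho$, $1-\rho$, and $s$.

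The main obstacle will be isolating a manifestly nonnegative representation of the numerator $N_s(\rho)$. The natural target is a decomposition as a finite sum of terms of the form $c_{a,b,c,d}(s)\, C^a (1-C)^b \rho^c (1-\rho)^d$ with coefficients $c_{a,b,c,d}(s) \geq 0$ for every $s \geq 1$; each such summand is nonnegative because $C(s,\rho) \in [0,1]$ and $\rho \in (0,1)$. Producing such a decomposition and, in particular, controlling the sign-changing contributions that arise from differentiating the factor $1/(1-\rho)$ three times is the technical heart of the proof, and it is precisely this algebraic bookkeeping that warrants deferral to the appendix.
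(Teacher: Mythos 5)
Your reduction is sound and is essentially the paper's: pass to $\rho=\lambda/(s\mu)$, use $W=1/\mu+\tfrac{1}{s\mu}\,C(s,\rho)/(1-\rho)$, apply the Riccati-type identity for the derivative of the Erlang-C/queue-length formula (the paper uses Grassmann's recursion for $L'$ from \cite{grassmann1983convexity}) to compress $W'''$ into a rational expression in $C$, $\rho$, $s$ with a manifestly positive prefactor, and then prove the remaining factor nonnegative. The paper arrives at $W'''(\rho)=\tfrac{C}{s\mu(1-\rho)^4\rho^3}f(s,C)$ with $f$ polynomial in $(s,C,\rho)$, which is exactly the compact form you are after.

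The gap is in your final step: the ``manifestly nonnegative representation'' you hope for --- a decomposition of the numerator into terms $c_{a,b,c,d}(s)\,C^a(1-C)^b\rho^c(1-\rho)^d$ with $c_{a,b,c,d}(s)\ge 0$ --- cannot exist, because the reduced expression is \emph{not} nonnegative when $C$ is treated as a free symbol in $[0,1]$. Concretely, for the paper's
\begin{equation*}
f(s,C)=-s^2(1-\rho)^4((7C-6)\rho+3)+2s(1-\rho)^2(\rho(2C((3C-7)\rho+1)+9\rho-4)+1)-6(C-2)((C-2)C+2)\rho^3+s^3(1-\rho)^6
\end{equation*}
one has $f(10,1)=-4.25<0$ at $\rho=\tfrac12$ (whereas the true Erlang-C value there is $C(10,\tfrac12)\approx 0.036$, for which $f$ is positive). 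So nonnegativity holds only because the actual Erlang-C function satisfies sharp upper bounds as a function of $(s,\rho)$; any proof must inject that information. This is precisely what the paper does: it uses $C\le\rho$ together with the bound $C\le 1+\tfrac{s(1-\rho)^2}{2\rho}-\tfrac{1-\rho}{2\rho}\sqrt{4s\rho+s^2(1-\rho)^2}$ from \cite{harel2010sharp}, shows $\partial f(n,C)/\partial n\ge 0$ for $2\le n\le s$ under these bounds (a quadratic-in-$C$ root-location argument), and then verifies the base case $f(2,C)\ge 0$ by elementary calculus using $C\le\rho$. Your proposal stops exactly where the real work begins; to repair it you would need to replace the sign-decomposition target with an argument that exploits quantitative bounds on $C(s,\rho)$, or follow the paper's monotonicity-in-$s$ reduction to $s=2$.
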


Convexity of $W(s,\cdot)$ as a function of $\lambda$ implies that the extremal distribution of $\Lambda$ is given by the extremal distribution in Lemma~\ref{lemma:ec1}(i), thus yielding our main result.

\begin{theorem}[Second-order bound for random arrival rate]\label{thmlambda2}
Consider an {M/M/$s$} queue with random arrival rate $\Lambda$ that follows a distribution $\mathbb{P}$ belonging to the ambiguity set  $\mathcal{P}(m,\sigma,\llambda,\ulambda)$. The tight upper bound for the expected waiting time 
$\mathbb{E}_{\mathbb{P}}[{W}(s,\Lambda)]$ is attained by the two-point distribution with support 
$$
\lambda_1=m - \frac{\sigma^2}{\ulambda - m},\quad \lambda_2=\ulambda,
$$
and corresponding probabilities 
$$
p_1=\frac{(\ulambda-m)^2}{(\ulambda-m)^2+\sigma^2} ,\quad p_2=\frac{\sigma^2}{(\ulambda-m)^2+\sigma^2}.
$$
\end{theorem}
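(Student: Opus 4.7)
The plan is to reduce Theorem \ref{thmlambda2} directly to Lemma \ref{lemma:ec1}(i) by taking $\phi(\cdot) = W(s,\cdot)$ on the bounded interval $[\llambda, \ulambda]$. Once the two hypotheses of that lemma are verified, namely continuous differentiability of $W(s,\cdot)$ on $[\llambda,\ulambda]$ and convexity of its first derivative on the same interval, the theorem follows by specializing $\lx = \llambda$, $\ux = \ulambda$, $X = \Lambda$ in the lemma's conclusion.

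For the differentiability requirement I would appeal to the standard Erlang-C expression for $W(s,\lambda)$, which is a rational function of $\rho = \lambda/(s\mu)$ whose only singularity is at $\rho = 1$. The standing stability hypothesis $\ulambda < s\mu$ keeps $[\llambda,\ulambda]$ bounded away from that pole, so $W(s,\cdot)$ is in fact analytic on an open neighborhood of the support and, in particular, continuously differentiable there.

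The convexity of $\partial_\lambda W(s,\cdot)$ is exactly the content of the lemma stated just before the theorem, whose proof adapts the method of \cite{randhawa2016optimality} for $L(s,\lambda)$ and is relegated to the Appendix. For the purposes of this proof it may be invoked as a black box; it is nonetheless the true technical obstacle in the entire development, since once the convex-derivative property is granted there is essentially nothing left to do.

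Combining the two hypotheses and applying Lemma \ref{lemma:ec1}(i) then yields precisely the two-point distribution with support $\{m - \sigma^2/(\ulambda-m),\ \ulambda\}$ and probabilities $\{(\ulambda-m)^2/((\ulambda-m)^2+\sigma^2),\ \sigma^2/((\ulambda-m)^2+\sigma^2)\}$ announced in the theorem. The interior conditions $m\in(\llambda,\ulambda)$ and $\sigma^2\in(0,(\ulambda-m)(m-\llambda))$ needed for Proposition \ref{propbirge} and Lemma \ref{lemma:ec1} are implicit in the nontriviality of the ambiguity set $\mathcal{P}(m,\sigma,\llambda,\ulambda)$, so no further verification is required to conclude.
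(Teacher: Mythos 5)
Your proposal is correct and follows essentially the same route as the paper: the theorem is obtained by applying Lemma~\ref{lemma:ec1}(i) to $\phi(\cdot)=W(s,\cdot)$, with the convex-derivative property supplied by the preceding lemma (proved in the Appendix) and smoothness guaranteed by the stability condition $\ulambda<s\mu$. Your explicit check of continuous differentiability and of the interior moment conditions is a minor elaboration the paper leaves implicit, but it does not change the argument.
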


We demonstrate the effectiveness of our second-order bounds through a numerical experiment. The results are presented in Table~\ref{tab:EW}. We assume that the “true” distribution of the random arrival rate follows a beta distribution with support $[0,\ulambda]$ and standard deviation fixed to $\sigma=0.1$. We consider the scenario with $s=2$ servers and vary the mean $m$ to obtain results for multiple utilization levels. The average utilization level is denoted by $\rho=m/(s\mu)$. As shown in Table~\ref{tab:EW}, the second-order bounds proposed in Theorem~\ref{thmlambda2} perform well for low- and medium-utilization regimes. However, for high utilization, the bounds deviate significantly from the true value when assuming a beta distribution for $\Lambda$. Furthermore, the bounds widen as the upper bound of the support $\ulambda$ increases. The second column corresponds to the second-order lower bound, which can be obtained using the extremal distribution in Lemma~\ref{lemma:ec1}(ii). Notice that this bound depends on $\llambda$, the lower bound of the support, rather than $\ulambda$.

\begin{table}[h!]
\renewcommand{\arraystretch}{1}
\centering
\caption{Numerical bounds for the expected waiting time in the M/M/2 queue with random arrival rate following a beta distribution with support $[0,\ulambda]$ and $\sigma=0.1$}\label{tab:EW}
\begin{tabular}{@{}cccc|cc|cc@{}}
\toprule
       &         & \multicolumn{2}{c}{$\ulambda=1.925$} & \multicolumn{2}{c}{$\ulambda=1.95$} & \multicolumn{2}{c}{$\ulambda=1.99$} \\ \cmidrule(l){3-8} \cmidrule(l){3-8} 
$\rho$ & LB      & $\E[W(2,\Lambda)]$  & \textrm{UB}        & $\E[W(2,\Lambda)]$  & \textrm{UB}      & $\E[W(2,\Lambda)]$  & \textrm{UB}       \\ \midrule
0.2    & 1.0445 & 1.0449             & 1.0940  & 1.0449            & 1.1199  & 1.0449            & 1.4312   \\
0.5    & 1.3389 & 1.3440             & 1.4655  & 1.3440            & 1.5315  & 1.3440            & 2.3236   \\
0.7    & 1.9753 & 2.0094             & 2.3200  & 2.0096            & 2.5002  & 2.0098            & 4.6625   \\
0.8    & 2.8099 & 2.9442             & 3.5518  & 2.9465            & 3.9446  & 2.9497            & 8.6515   \\
0.9    & 5.3921 & 6.4285             & 7.6443  & 6.6019            & 9.0133  & 6.9387            & 25.0550  \\
\bottomrule
\end{tabular}
\end{table}

\subsection{Sharper bounds for unimodal market size}\label{subsecUnimod}
Often more is known about the market-size distribution than just the mean and variance. For example, we may have some structural information regarding the distribution of the market size. In this section, we show how to use this structural information to improve the tight bounds, corresponding to the situation in which the service provider has the additional information that the market-size distribution is unimodal, which can be understood as there being one primary market segment that constitutes the bulk of the arrivals requesting service. 

A random variable is said to be unimodal with mode $\mode$ if its distribution can be characterized as the mixture of a Dirac distribution $\delta_{\mode}$ and a distribution function that is a concave function on $[\lx,m]$ and a convex function on $(m,\ux]$. Some examples of unimodal distributions are the normal, exponential, and beta probability distributions. In order to incorporate such unimodality, we apply Khinchine's characterization theorem, which states that a random variable $Y$ has a unimodal distribution with mode zero if, and only if, there exists a random variable $Y$ that can be expressed as $Y=UV$, where $U$ is a uniformly distributed random variable on $[0,1]$ independent of some random variable $V$ (see, e.g., \citep[p.~158]{feller1971}). We adopt the following approach from \cite{kemperman1971moment} (see also \cite{brockett1985insurance}). First, we transform the problem from the unimodal random variable $X$ to the auxiliary variable $V$. Then, we establish tight bounds for $V$ and use them to obtain bounds for $X$. To be specific, if $X$ is unimodal with mode $\mode$, then $Y=X-\mode$ is also unimodal but with mode 0. Hence, according to Khinchine's theorem, we have $Y=UV$, where $U$ is uniformly distributed over $[0,1]$. According to Kemperman's approach, we can transform the moment problem in terms of the random variable $X$ into one in terms of the auxiliary random variable $V$ using that for any function $\phi$, $\E[\phi(Y)]=\E[\phi^*(V)]$, where
\begin{equation}\label{eq:uniformExpectation}
\phi^*(x)=\frac{1}{x} \int_0^x \phi(t) d t=\E[\phi(U V) \mid V=x].
\end{equation}
By taking $\phi(x)=x^k$, we can relate the moments of $Y$ to those of $V$ through
$$
\phi^*(x)=\frac{1}{(k+1)} x^k,
$$
so that $\E[V^k]=(k+1) \E[Y^k]$. The mean and variance of $V$, in terms of the mean and variance of $X$, are given by
$$
\begin{aligned}
m_V & =\E[V]=2 \E[Y]=2\left(m_X-\mode\right). \\
\sigma_V^2 & =\E[V^2]-(\E[V])^2=3 \E[Y^2]-4(\E[Y])^2=3 \sigma_Y^2-(\E[Y])^2=3 \sigma_X^2-\left(m_X-\mode\right)^2.
\end{aligned}
$$
We can use the same integral relationship to find tight bounds for $\E[\phi(X)]$ when $X$ is unimodal with mode $\mode$. Specifically, this problem is equivalent to finding bounds for $\E[\phi^*(V)]$ subject to the moment constraints on $V$, where
$$
\phi^*(x)=\frac{1}{x} \int_0^x \phi(u+\mode) \d u.
$$
We already have the solution for this transformed problem, since the (conditional) expectation operator \eqref{eq:uniformExpectation} preserves the curvature properties. By substituting the transformed moments into the result stated in Lemma~\ref{lemma:ec1}, we obtain the following result.

\begin{theorem}[M/M/$\boldsymbol{s}$ queue with unimodality]\label{thmlambdaunimod}
Consider an {M/M/$s$} queue with random arrival rate $\Lambda$. Suppose that $\Lambda$ is unimodal with mode $\mode$, mean $m$, variance $\sigma^2$, and has support contained in $[\llambda,\ulambda]$. Let
$$
W^*(s,x)=\frac{1}{x} \int_m^{x+m} W(s,t)\d t.
$$
Then, the tight upper bound for the expected waiting time 
$\mathbb{E}_{\mathbb{P}}[{W}(s,\Lambda)]$ is given by
$$
(1-p)W^*\left(s,\lambda_1\right)+pW^*(s,\lambda_2), 
$$
where
$$
\begin{aligned}
\lambda_1=2 m-2 \mode-\frac{3 \sigma^2-\left(m-\mode\right)^2}{\ulambda+\mode-2 m},\quad \lambda_2=\ulambda - \mode,\quad p=\frac{3 \sigma^2-\left(m-\mode\right)^2}{3 \sigma^2-\left(m-\mode\right)^2+\left(\ulambda+\mode-2 m\right)^2}.
\end{aligned}
$$  
\end{theorem}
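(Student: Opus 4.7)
The plan is to reduce the unimodal moment problem to the non-unimodal setting of Theorem~\ref{thmlambda2} via Khinchine's representation, and then invoke Lemma~\ref{lemma:ec1}(i) in the transformed problem. First I would set $Y = \Lambda - \mode$, which by assumption is unimodal with mode $0$, and invoke Khinchine's theorem to write $Y \stackrel{d}{=} UV$ with $U\sim\mathrm{Uniform}[0,1]$ independent of $V$. Applying the identity \eqref{eq:uniformExpectation} with $\phi(y) = W(s,y+\mode)$ gives
\[
\E_\P[W(s,\Lambda)] \;=\; \E\bigl[W(s,\mode+UV)\bigr] \;=\; \E[W^*(s,V)],
\]
so that the original objective coincides with the expectation of $W^*(s,\cdot)$ under the distribution of $V$. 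As derived in the excerpt, the mean-variance constraints on $\Lambda$ translate into $m_V := \E[V] = 2(m-\mode)$ and $\sigma_V^2 := \E[V^2]-m_V^2 = 3\sigma^2 - (m-\mode)^2$; moreover, since $U\in[0,1]$ almost surely, the support of $V$ is contained in $[\llambda-\mode,\ulambda-\mode]$. The problem has thus been reduced to a standard two-moment bound for $\E[W^*(s,V)]$ on a bounded interval.

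The crucial step, and the one I expect to be the main obstacle, is to verify that $x\mapsto W^*(s,x)$ inherits the convex-derivative property required to invoke Lemma~\ref{lemma:ec1}(i). Writing $W^*(s,x) = \E[W(s,\mode+Ux)]$ and differentiating three times under the expectation yields
\[
\frac{\partial^3}{\partial x^3}W^*(s,x) \;=\; \E\!\left[U^3\,\frac{\partial^3}{\partial \lambda^3}W(s,\mode+Ux)\right],
\]
which is nonnegative by the convex-derivative lemma for $W(s,\cdot)$. Interchanging differentiation and expectation is legitimate because $W(s,\cdot)$ and its first three derivatives are smooth and bounded on the compact interval $[\llambda,\ulambda]$ (under the stability assumption $\ulambda<s\mu$). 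Hence the derivative of $W^*(s,\cdot)$ is convex on $[\llambda-\mode,\ulambda-\mode]$.

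Finally, I would apply Lemma~\ref{lemma:ec1}(i) to $V$ with upper support endpoint $\ulambda-\mode$, mean $m_V$, and variance $\sigma_V^2$. Direct substitution into the closed-form two-point distribution of Lemma~\ref{lemma:ec1}(i) produces exactly the quantities $\lambda_1$, $\lambda_2$, and $p$ appearing in the theorem; the tight upper bound is therefore $(1-p)W^*(s,\lambda_1)+p\,W^*(s,\lambda_2)$, as claimed. A feasibility check reduces to $\sigma_V^2\in(0,(\ulambda-\mode-m_V)(m_V-(\llambda-\mode)))$, which is implied by the standing unimodality and moment conditions on $\Lambda$.
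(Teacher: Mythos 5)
Your proposal is correct and follows essentially the same route as the paper: the Khinchine--Kemperman reduction to the auxiliary variable $V$, the observation that the averaging operator preserves the convex-derivative property (which you verify explicitly via $\partial_x^3 W^*(s,x)=\E\bigl[U^3\,W'''(s,\mode+Ux)\bigr]\ge 0$, a detail the paper only asserts), and substitution of the transformed moments into Lemma~\ref{lemma:ec1}(i). The only discrepancy is that your derivation yields $W^*(s,x)=\tfrac1x\int_{\mode}^{x+\mode}W(s,t)\,\mathrm{d}t$, with the mode rather than the mean as the shift in the integration limits; this is the form consistent with the general transformation, so the theorem's printed limits appear to be a typo that you have silently corrected.
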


Table~\ref{tab:EWuni} demonstrates the performance of the tight mean-variance bounds that incorporate unimodality information. Evidently, incorporating this structural information significantly sharpens the bounds compared to the standard mean-variance bounds without this information. 

\begin{table}[h!]
\renewcommand{\arraystretch}{1}
\centering
\caption{Numerical bounds for the expected waiting time in the M/M/2 queue with unimodal beta arrival rate distribution}\label{tab:EWuni}
\begin{tabular}{@{}cccc|ccc@{}}
\toprule
       &    \multicolumn{3}{c}{$\ulambda=1.95$} & \multicolumn{3}{c}{$\ulambda=1.99$} \\ \cmidrule(l){2-7} \cmidrule(l){2-7} 
$\rho$      & $\E[W(2,\Lambda)]$  & \textrm{Var-UB}        & \textrm{Unimod-UB}  & $\E[W(2,\Lambda)]$      & \textrm{Var-UB}  & \textrm{Unimod-UB}       \\ \midrule
0.2    & 1.0449             & 1.1199  & 1.0583            & 1.0449  & 1.4312             & 1.0686  \\
0.5    & 1.3440             & 1.5315  & 1.3879            & 1.3440  & 2.3236             & 1.4277  \\
0.7    & 2.0096             & 2.5002  & 2.1502            & 2.0098  & 4.6625             & 2.3124  \\
0.8    & 2.9465             & 3.9446  & 3.2477            & 2.9497  & 8.6515             & 3.7098  \\
0.9    & 6.6019             & 9.0133  & 7.1468            & 6.9387  & 25.0550            & 9.3105  \\ 
\bottomrule
\end{tabular}
\end{table}

\subsection{Broader robust applications}\label{sec:furtherapplic}
In practice, it might turn out that other parameters besides the arrival rate are uncertain, with only partial information available. It seems reasonable to assume that decision makers may not always have precise knowledge of service rates, as these rates might depend on specific server characteristics. Hence, it is of interest to establish also second-order bounds for the service rate at which the different servers operate. In this regard, we consider the scenario in which the service rate $\mu$ is replaced by a random variable $M$. 
Convexity of the mean queue size of the M/M/$s$ queue with respect to the system utilization $\rho$ is a classic result \citep{grassmann1983convexity,lee1983note}. The following result pertains to the multi-server setting, where we again will rely on the results in
\cite{randhawa2016optimality}.

\begin{lemma}[Derivative w.r.t. $\mu$ is concave]\label{lemma:muconcave}
The expected waiting time $W(s,\lambda, \mu)$ as function of $\mu$
has a concave derivative.  
\end{lemma}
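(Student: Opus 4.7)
My plan is to reduce the claim to the convexity of the $\lambda$-derivative already established in the preceding lemma, using the classical time-scaling identity
$$
W(s,\lambda,\mu) \;=\; \frac{1}{\mu}\, W(s,\lambda/\mu,\,1),
$$
which simply expresses that rescaling time by a factor $\mu$ turns the M/M/$s$ system with rates $(\lambda,\mu)$ into one with rates $(\lambda/\mu,1)$. Write $f(x):=W(s,x,1)$ for $x\in(0,s)$ and $g(\mu):=W(s,\lambda,\mu)=f(\lambda/\mu)/\mu$ with $\lambda$ fixed; the goal is to show $g'''(\mu)\le 0$.

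The next step is the substitution $\tau=1/\mu$, under which $g(\mu)=\phi(1/\mu)$ with $\phi(\tau):=\tau\,f(\lambda\tau)$. A routine three-fold application of the chain rule will yield the identity
$$
g'''(\mu)\;=\;-\frac{1}{\mu^{6}}\,\bigl[\,6\,\phi'(\tau)+6\tau\,\phi''(\tau)+\tau^{2}\phi'''(\tau)\,\bigr]_{\tau=1/\mu},
$$
and a direct check shows that the bracketed quantity equals $(\tau^{2}\phi(\tau))'''=(\tau^{3}f(\lambda\tau))'''$. Thus concavity of $\partial W/\partial\mu$ in $\mu$ is equivalent to the nonnegativity of the third derivative, in $\tau$, of $\tau\mapsto\tau^{3}f(\lambda\tau)$.

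Once this reformulation is in place, the Leibniz rule gives, writing $x=\lambda\tau\in(0,s)$,
$$
\frac{\d^{3}}{\d\tau^{3}}\bigl[\tau^{3}f(\lambda\tau)\bigr]\;=\;6\,f(x)+18\,x\,f'(x)+9\,x^{2}f''(x)+x^{3}f'''(x).
$$
All four coefficients are positive, and each of $f,f',f'',f'''$ is nonnegative on $(0,s)$: $f\ge 0$ because waits are nonnegative, $f'>0$ and $f''>0$ reflect the monotonicity and convexity of $W(s,\cdot)$ in $\lambda$, and $f'''\ge 0$ is precisely the convex-derivative lemma stated just before Theorem~\ref{thmlambda2}. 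Hence $g'''(\mu)\le 0$, which is the desired concavity of $\mu\mapsto\partial W/\partial\mu$.

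The hard part is purely algebraic bookkeeping rather than analytical: one must verify that the chain-rule expansion of $(\phi(1/\mu))'''$ collapses into the tidy pattern $-\mu^{-6}(\tau^{2}\phi)'''$, and then that after the second substitution $\phi(\tau)=\tau f(\lambda\tau)$ the Leibniz expansion of $(\tau^{3}f(\lambda\tau))'''$ happens to have only nonnegative coefficients, so that sign information on $f,f',f'',f'''$ alone—exactly the information supplied by the previous lemma—suffices to close the argument. Beyond this computation there is no further analytical difficulty to confront.
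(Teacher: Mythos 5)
Your argument is correct, but it takes a genuinely different route from the paper's. The paper works with the expected queue length $L$ rather than $W$: since $L$ depends on $\mu$ only through $\rho=\lambda/(s\mu)$, it writes $W(s,\lambda,\mu)=L(\rho)/\lambda$ and differentiates the pure composition $L(\lambda/(s\mu))$ three times, observing that every term in the chain-rule expansion has the right sign because $g(\mu)=\lambda/(s\mu)$ satisfies $g'<0$, $g''>0$, $g'''<0$ while $L',L'',L'''\geq 0$ in $\rho$ (the last by the cited result of Randhawa). Passing through $L$ absorbs the awkward $1/\mu$ prefactor into the constant $1/\lambda$ via Little's law, so no product rule is needed. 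You instead stay with $W$ via the time-scaling identity $W(s,\lambda,\mu)=\mu^{-1}W(s,\lambda/\mu,1)$, which forces you to handle the product $\tau f(\lambda\tau)$; your substitution $\tau=1/\mu$ and the identity $6\phi'+6\tau\phi''+\tau^2\phi'''=(\tau^2\phi)'''$ collapse everything into $(\tau^3 f(\lambda\tau))'''=6f+18xf'+9x^2f''+x^3f'''\geq 0$, which needs only the nonnegativity of $f,f',f'',f'''$ --- i.e., exactly the monotonicity, convexity, and convex-derivative properties of $W(s,\cdot)$ in $\lambda$ established earlier in the paper. What your route buys is self-containedness (it reuses the paper's own $\lambda$-lemma rather than the external result on $L'''$) and a reusable template for any performance measure obeying the same scaling law; what the paper's route buys is brevity, since no product rule or substitution identity is required. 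One small slip: the prefactor in your expression for $g'''(\mu)$ should be $-\mu^{-4}$ (equal to $-\tau^4$ after factoring the bracket as you wrote it), not $-\mu^{-6}$; this is harmless since only the sign matters, but it should be corrected.
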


\begin{proof}


Denote $\phi_\mu(x):=W(s,\lambda,x)$. To prove that  $\phi_\mu'(x)$ is concave, we will use the result in \cite{randhawa2016optimality} which proves convexity of $L'$ as function of $\rho$.
Interpreting $L$ indeed as function of $\rho$, define $h(x):=L$ and $g(\mu):=\lambda/(s\mu)$, where $\mu$ is the variable and the parameters $\lambda$ and $s$ are fixed to arbitrary constants. To see that the composite function $(h\circ g)(\mu) = L(\lambda/(s\mu))$ is decreasingly convex, notice that
$$
\frac{\partial^3}{\partial \mu^3} (h\circ g)(\mu)= \underbrace{g'''(\mu) h'(g(\mu))}_{g'''<0, h'\geq0} + \underbrace{g'(\mu)^3 h'''(g(\mu)) }_{g'<0, h'''\geq0} + \underbrace{3g'(\mu)g''(\mu)h''(g(\mu))}_{g'<0,g''>0,h''\geq0} \leq 0.
$$
Little's law indicates that $W(s,\lambda,\mu) = L(s,\lambda,\mu) / \lambda$, with $\lambda>0$ a fixed constant, which completes the proof.
\end{proof}

Hence, $W(s,\lambda,\mu)$ is a decreasing, convex function of $\mu$ with a concave derivative. By Lemma~\ref{lemma:ec1}(ii), the extremal distribution is given by a two-point distribution with support $\{\mu_1,\mu_2\}=\{\lmu,m_{\mu}+\frac{\sigma^2}{m_{\mu}-\lmu}\}$ and respective probabilities 
$$
p_1=\frac{\sigma^2}{(m_{\mu}-\lmu)^2+\sigma^2}, \quad p_2=\frac{(m_{\mu}-\lmu)^2}{(m_{\mu}-\lmu)^2+\sigma^2}.
$$
That is, the worst case is given by servers which either operate at their lowest service capacity or just above the mean service rate.

Moreover, it is possible to work with multiple uncertain parameters simultaneously. We next consider
$$
 \max_{\mathbb{P}_\Lambda\in \mathcal{P}_{(m_\Lambda,\sigma_\Lambda)}} \max_{\mathbb{P}_M\in \mathcal{P}_{(m_M,\sigma_M)}} \mathbb{E}_{\P_M \otimes\P_\Lambda}[ W(s,\Lambda,M)],
$$
which turns out to be solvable in closed form:

\begin{theorem}[M/M/$\boldsymbol{s}$ queue with partially known service rate and market size]\label{thm1lambdamu}
Let the service rate $M$ follow a marginal distribution $\mathbb{P}_M\in\mathcal{P}_M:=\mathcal{P}_{(m_M,\sigma_M)}$ and the market size $\Lambda$ follow a marginal distribution $\mathbb{P}_\Lambda\in\mathcal{P}_\Lambda:=\mathcal{P}_{(m_\Lambda,\sigma_\Lambda)}$. Suppose that $\ulambda<s\lmu$. If $M$ and $\Lambda$ are independent, the sharpest possible upper bound for $\mathbb{E}_{\mathbb{P}}[W(s,\Lambda,M)]$ is attained by the product measure $\P^*_M\otimes\P^*_\Lambda$, with $\P^*_\Lambda$ and $\P^*_M$ as defined in {\rm Lemma~\ref{lemma:ec1}(i)} and~{\rm(ii)}, respectively.
\end{theorem}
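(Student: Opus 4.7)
The plan is to exploit independence of $\P_\Lambda$ and $\P_M$ to decouple the joint moment problem into two univariate moment problems, each solved by Lemma~\ref{lemma:ec1}. The key identity to justify is
$$\max_{\P_M\in\mathcal{P}_M}\max_{\P_\Lambda\in\mathcal{P}_\Lambda}\E_{\P_M\otimes\P_\Lambda}[W(s,\Lambda,M)] \;=\; \max_{\P_M\in\mathcal{P}_M}\E_{\P_M}\Bigl[\max_{\P_\Lambda\in\mathcal{P}_\Lambda}\E_{\P_\Lambda}[W(s,\Lambda,M)]\Bigr],$$
which hinges on showing that the inner maximizer is the same for every value of the outer variable $\mu$.

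First I would fix an arbitrary $\mu$ in the support of $M$. The assumption $\ulambda<s\lmu$ yields $\ulambda<s\mu$, so that $\lambda\mapsto W(s,\lambda,\mu)$ is well defined on $[\llambda,\ulambda]$, and the lemma preceding Theorem~\ref{thmlambda2} shows that its derivative in $\lambda$ is convex. Lemma~\ref{lemma:ec1}(i) then identifies the inner maximizer as the two-point distribution $\P^*_\Lambda$ of Theorem~\ref{thmlambda2}. The decisive observation is that the support points $\{\lambda_1,\lambda_2\}$ and probabilities $\{p_1,p_2\}$ of $\P^*_\Lambda$ depend on $(m_\Lambda,\sigma_\Lambda,\ulambda)$ alone — the objective enters only through the qualitative property that its derivative in $\lambda$ is convex — so $\P^*_\Lambda$ is the inner maximizer uniformly in $\mu$.

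Substituting $\P^*_\Lambda$, the outer problem reduces to $\max_{\P_M\in\mathcal{P}_M}\E_{\P_M}[g(M)]$ with $g(\mu):=p_1 W(s,\lambda_1,\mu)+p_2 W(s,\lambda_2,\mu)$. By Lemma~\ref{lemma:muconcave}, each $\mu\mapsto W(s,\lambda_i,\mu)$ has a concave derivative on the support of $M$, and since concavity of a function's derivative is preserved under nonnegative linear combinations, $g'$ is also concave. Lemma~\ref{lemma:ec1}(ii) then yields $\P^*_M$ as the outer maximizer, and the joint optimum is $\P^*_M\otimes\P^*_\Lambda$.

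The principal — if mild — obstacle is making the iterated maximization lossless. It would fail if the inner optimizer depended on $\mu$, because plugging in a $\mu$-dependent maximizer would not correspond to any single product distribution. What rescues the argument is the structural nature of the extremal distributions in Lemma~\ref{lemma:ec1}: their form is dictated only by the curvature of $\phi'$ and by the first two moments and support of the underlying variable, rather than by fine details of $\phi$. Hence the two marginal maximizations can be carried out sequentially without sacrificing optimality.
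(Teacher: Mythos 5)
Your proposal is correct and follows essentially the same strategy as the paper: decompose the joint problem into sequential univariate moment problems and exploit that the extremal two-point distributions of Lemma~\ref{lemma:ec1} depend only on the moments, the support, and the curvature of $\phi'$, not on the precise form of $W$ (the paper makes exactly this remark after the theorem). The only cosmetic difference is that you maximize over $\P_\Lambda$ first pointwise in $\mu$, whereas the paper fixes $\P_\Lambda$, maximizes over $\P_M$ first, and justifies the uniformity of the maximizer by interchanging differentiation and expectation; both routes are sound.
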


\begin{proof}
The tower rule yields
$$
\max_{\mathbb{P}_M\in \mathcal{P}_M}\mathbb{E}_{\P}[ W(s,\Lambda,M)] = \max_{\mathbb{P}_M\in \mathcal{P}_M}\E_{M}[\mathbb{E}_{\Lambda}[ W(s,\Lambda,M)|M]], 
$$
where the expectation is well defined due to $s\lmu>\ulambda$. Hence, fixing the distribution of $\Lambda$ gives the moment problem 
$$
\max_{\mathbb{P}_M\in \mathcal{P}_M} \int_{[\lmu,\umu]} \E_{\Lambda}[W(s,\Lambda,\mu)] \Dist{\mu}.
$$
For $\lambda$ a fixed constant, $W(s,\Lambda,\mu)$ has the required convexity properties, so $\P^*_M$ is the solution to the moment problem. For $\P^*_M$ to work also for $\phi_\mu(x):=\E_{\Lambda}[W(s,\Lambda,x)]$, we need to check the signs of the first three derivatives. We interchange differentiation and expectation operations (which is allowed as $\Lambda$ is bounded), so that
$$
\phi^{(k)}_\mu(x) = \frac{\d^{(k)}}{\d x^{(k)}}\E_{\Lambda}[W(s,\Lambda,x)] = \E_{\Lambda}\left[\frac{\d^{(k)}}{\d x^{(k)}} W(s,\Lambda,x)\right].
$$ 
As the expectation operator is nonnegative,
$$
\frac{\d^{(k)}}{\d x^{(k)}} W(s,\lambda,x) \geq 0,\ \forall\lambda \implies \E_{\Lambda}\left[\frac{\d^{(k)}}{\d x^{(k)}} W(s,\Lambda,x)\right] \geq 0,
$$
and likewise,
$$
\frac{\d^{(k)}}{\d x^{(k)}} W(s,\Lambda,x) \leq 0,\ \forall\lambda \implies \E_{\Lambda}\left[\frac{\d^{(k)}}{\d x^{(k)}} W(s,\Lambda,x)\right] \leq 0,
$$
which proves that $\P^*_M$ maximizes $\phi_\mu(x)$ for any distribution of $\Lambda$. Substituting the two-point distribution gives the moment problem
$$
\max_{\mathbb{P}_\Lambda\in \mathcal{P}_\Lambda} \E_{\P_\Lambda}[p_1 W(s,\Lambda,\mu_1) + p_2 W(s,\Lambda,\mu_2)].
$$
Based on the same line of reasoning, this expression is maximized by $\P^*_\Lambda$.
\end{proof}

\new{Theorem~\ref{thm1lambdamu} leverages that the extremal distributions for univariate settings are independent of the precise form of $W(s,\lambda,\mu)$, and can thus be applied recursively when considering multiple uncertain parameters.}

The applications of the second-order bound extend beyond the M/M/$s$ queue and potentially include a vast range of uses. However, to apply the bounds, we need the \new{“third-order property” of a convex derivative}. 
As a concrete example, \new{consider} the Pollaczek-Khinchine formula for the expected waiting time in the M/G/1 queue given by
$$
\widetilde{W}(\lambda,\mu) = \frac{\rho+\lambda \mu \tilde{\sigma}^2}{2(\mu-\lambda)}+\mu^{-1},
$$
where $\tilde{\sigma}^2$ denotes the variance of the service times. Now observe that
$$
\frac{\partial}{\partial \lambda} \widetilde{W}(\lambda,\mu) = \frac{1+\mu^2\tilde{\sigma}^2}{2(\lambda-\mu)^2} \geq 0,\quad  \frac{\partial^2}{\partial \lambda^2} \widetilde{W}(\lambda) = \frac{1+\mu^2\tilde{\sigma}^2}{(\mu-\lambda)^3}\geq 0,
$$
and moreover,
$$
\frac{\partial^3}{\partial \lambda^3} \widetilde{W}(\lambda,\mu) = \frac{3(1+\mu^2\tilde{\sigma}^2)}{(\lambda-\mu)^4} \geq 0, \quad \text{for } \lambda<\mu.
$$
Hence, $\new{\widetilde{W}}(\cdot,\mu)$ is increasingly convex in $\lambda$, and $\E[\widetilde{W}(\Lambda,\mu)]$ can be bounded using Lemma~\ref{lemma:ec1}, yielding the following result.

\begin{proposition}[M/G/1 queue with variance information]\label{thmlambdaMG1var}
Let the market size $\Lambda$ have a distribution $\mathbb{P}$ that resides in the ambiguity set  $\mathcal{P}(m,\sigma,\llambda,\ulambda)$, and let $\widetilde{W}(\lambda,\mu)$ denote the expected waiting time in the M/G/1 queue. Then, the tight upper bound for $\mathbb{E}_{\mathbb{P}}[\widetilde{W}(\Lambda,\mu)]$ is attained by the two-point distribution stated in {\rm Theorem~\ref{thmlambda2}}.
\end{proposition}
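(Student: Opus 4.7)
The plan is to reduce the proposition to a direct application of Lemma \ref{lemma:ec1}(i), exploiting the fact that the required analytic properties of the Pollaczek-Khinchine waiting time have already been computed in the display preceding the statement. With $\mu$ and $\tilde{\sigma}^2$ fixed, I would set $\phi(\lambda) := \widetilde{W}(\lambda,\mu)$ and regard $\phi$ as a function on the compact interval $[\llambda,\ulambda]$, where stability is guaranteed by the standing assumption $\ulambda < s\mu$ specialized to $s=1$, so that $\phi$ is well-defined and smooth throughout.

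Next, I would observe that the calculations shown just above the proposition verify that $\phi'(\lambda) \geq 0$, $\phi''(\lambda) \geq 0$, and most importantly $\phi'''(\lambda) = 3(1+\mu^2\tilde{\sigma}^2)/(\mu-\lambda)^4 > 0$ for all $\lambda \in [\llambda,\ulambda]$. The nonnegativity of $\phi'''$ is precisely the statement that $\phi'$ is convex on $[\llambda,\ulambda]$, which is the sole analytic hypothesis needed in Lemma \ref{lemma:ec1}(i). Continuous differentiability of $\phi$ is immediate from the closed form of the Pollaczek-Khinchine formula, and the interior moment conditions $m \in (\llambda,\ulambda)$, $\sigma^2 \in (0,(\ulambda-m)(m-\llambda))$ are inherited from the hypothesis $\mathbb{P} \in \mathcal{P}(m,\sigma,\llambda,\ulambda)$ being a nondegenerate ambiguity set.

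Invoking Lemma \ref{lemma:ec1}(i) then yields that the tight upper bound for $\mathbb{E}_{\mathbb{P}}[\phi(\Lambda)]$ is attained by the two-point distribution with support
$$
\Big\{\, m - \frac{\sigma^2}{\ulambda - m},\; \ulambda \,\Big\}
$$
and corresponding probabilities
$$
\Big\{\, \frac{(\ulambda-m)^2}{(\ulambda-m)^2+\sigma^2},\; \frac{\sigma^2}{(\ulambda-m)^2+\sigma^2} \,\Big\},
$$
which is exactly the extremal distribution identified in Theorem \ref{thmlambda2}. Since the general machinery of Section \ref{subsec:forgenericfunc} does all the work, there is essentially no obstacle; the only point requiring care is to verify that the third-derivative formula remains strictly positive on the entire support $[\llambda,\ulambda]$, which is immediate from $\ulambda < \mu$.
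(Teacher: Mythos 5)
Your proposal is correct and follows essentially the same route as the paper: the paper's ``proof'' of Proposition~\ref{thmlambdaMG1var} is precisely the displayed computation of the first three $\lambda$-derivatives of the Pollaczek--Khinchine formula, showing $\widetilde{W}(\cdot,\mu)$ is increasing and convex with a convex derivative on $[\llambda,\ulambda]$ (using $\ulambda<\mu$), followed by an appeal to Lemma~\ref{lemma:ec1}(i). Nothing further is needed.
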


Hence, despite some \new{potentially} tedious algebra, the second-order bounds that incorporate variance information have the potential to be widely applicable to other stochastic systems. Notwithstanding, as a cornerstone of queueing theory, the M/M/$s$ queue already boasts numerous pertinent applications. These include, but are not limited to, staffing optimization problems and rational queueing models, both of which find extensive usage in service operations management. We shall delve into the topic of rational queueing in the next section.

\section{Rational queueing model}\label{MMSsec3}
We now apply the results derived in the previous section to rational queueing models. Section~\ref{subsecrationalknown} introduces the rational queueing model with a known market size. In Section~\ref{subsecrationalunknown}, we extend this model to the setting in which customers are ambiguity-averse about the total market size. We provide a numerical example in  Section~\ref{sec:numerics}.

\subsection{Model with known market size}\label{subsecrationalknown}
Consider a firm that sells delay-prone services to a market of rational delay-sensitive individuals. Individuals value service, but dislike waiting, and will only join when the net service value exceeds the wait costs. They cannot observe real-time queues and instead estimate their expected waiting time costs based on beliefs or information about the total arrival rate of all potential customers. The arrival rate or market size is measured in terms of the scale parameter $\lambda$, so that the expected time between arrivals of consecutive individuals is $1/\lambda$. All individual joining decisions together give an equilibrium arrival rate or market share $\lambda_e$, which could be viewed as the product of $\lambda$ times the probability that an individual decides to join. 
We then consider the firm as a price-setting monopolist seeking to maximize revenue. With $p$ the price for service, the firm can influence the net service value of individuals, and hence the joining probability. 
A low price yields small margins but high joining probability, while a high price increases the profit per customer but suppresses the market share. The firm should thus strike the optimal balance between profit per customer and market share. Such challenges have been thoroughly addressed in the rational queueing literature \citep{hassin2016rational}.

Assume that the firm operates according to an M/M/$s$ queue, but that the arrival process consists of individuals that decide to join-or-not based on a rational-choice model. An individual's utility in acquiring service is defined as 
$$
U=r-p-c W(s,\lambda_e)
$$
with $r$ the value of service, $p$ the price for service, $c$ the wait costs per time unit, and $\lambda_e$ the effective arrival rate of joining individuals. Here we assume that individuals cannot observe real-time queue lengths, but instead know how the expected waiting time varies with overall demand. 
Hence, the rational choices are based on a linear relation between net service value and expected waiting time costs, and an individual will only join when $U\geq 0$. 
This is a common set-up in the rational queueing literature with self-interested individuals that tend to overcrowd systems, because they ignore externalities---inconvenient side effects---that their decisions have on others. When an individual decides to join, this will increase congestion and wait for all customers. Standard game theory then predicts that all individual decisions together will lead to an equilibrium, expressed in the equilibrium arrival rate, the probability $q(p)$ that an arbitrary individual joins the systems multiplied by the market size $\lambda$. Since not joining the queue gives zero utility, the equilibrium joining probability $q(p)$ then solves $U=r-p- c W(s,q(p)\lambda)=0$. The arrival process of those who decide to join is then a Poisson process with rate $q(p)\lambda$. 
To optimize revenue, the firm needs to set the price that strikes the best balance between margin $p$ and market share $\lambda_e(p) =q(p)\lambda$. 
Notice that 
the joining probability $q(p)$ is an implicit function that decreases with $p$ and can only be determined as the solution to $U=0$ when individuals know the arrival rate parameter $\lambda$. The firm then effectively
solves the monopoly pricing problem,
$
\max_p p\cdot q(p)\cdot \lambda,
$
to attain maximal revenue. The optimal solution to this problem for a given market size is already well established \citep{hassin2016rational}. Instead, in the next subsection, we choose to view the market size as a random variable of which only limited information is available. 

\subsection{Model with ambiguous market size}\label{subsecrationalunknown}
We interpret the market size as an unknown parameter about which the individuals that constitute the market form beliefs. The parameter $\lambda$ is then replaced by a random variable $\Lambda$ with a distribution representing what individuals believe or know about the market size. We primarily focus on the situation where individuals only know the support, mean, and variance of $\Lambda$. From the individual's perspective, the variance of $\Lambda$ expresses uncertainty about the market size, and hence uncertainty about the expected waiting time. For this setting with partially-informed customers, or an ambiguously specified arrival process, we consider the revenue-maximizing firm as a maximin decision maker, first determining the worst-case market (a minimization problem), and then maximizing the revenue by selecting the best price for this worst-case market. 
The firm thus attempts to solve the maximin problem
\begin{equation}\label{mpp}
\max_p\min_{\mathbb{P}\in \mathcal{P}_{(m,\sigma)}} \mathbb{E}_\mathbb{P}[p\cdot\Lambda_e(p)]
\end{equation}
with $\mathbb{P}$ the distribution of the market size $\Lambda$
and 
$\mathcal{P}_{(m,\sigma)}$ the ambiguity set that contains all distributions that satisfy the partial information, given by the mean, variance, and support.  Minimax and maximin optimization problems such as \eqref{mpp} arise naturally in decision making under uncertainty. Our strategy for solving such problems will be to solve first the minimization problem $\min_{\mathbb{P}\in \mathcal{P}_{(m,\sigma)}} \mathbb{E}_\mathbb{P}[\Lambda_e(p)]$, and then the maximization problem for this worst-case market.
The technical challenge is to solve the minimization problem, which requires determining tight bounds for the expected waiting time for all distributions of $\Lambda$ that comply with the partial information. However, this part was already resolved in the previous section, where we derived an upper bound on the expected waiting time. A tight upper bound on the expected waiting time presents the worst-possible scenario in the rational choice model and yields the corresponding tight lower bound on the market share. 
Observe from the expression for the utility $U\geq0$ that solving $\min_{\mathbb{P}\in \mathcal{P}_{(m,\sigma)}} \mathbb{E}_\mathbb{P}[\Lambda_e(p)]$
is tantamount to solving
$\max_{\mathbb{P}\in \mathcal{P}_{(m,\sigma)}} \mathbb{E}_\mathbb{P}[W(s,q(p)\Lambda)]$. That is, the worst-possible expected market share arises from the worst-possible utility and hence worst-possible expected waiting time. To make this more precise, notice that the maximin problem \eqref{mpp} can also be written as an optimization problem in terms of the equilibrium joining strategy $q(p)$; that is,
\begin{equation}\label{mppalt}
\begin{aligned}
\max_{q(p) \in[0,1] \cap[0, s\mu / \ulambda)} \min_{\P\in\cP_{(m,\sigma)}} \mathbb{E}_{\P}[q(p) \Lambda] &\cdot \mathbb{E}_{\P}[r-c W(s,q(p)\Lambda)] \\ &\equiv \max_{q(p) \in[0,1] \cap[0, s\mu / \ulambda)} \min_{\P\in\cP_{(m,\sigma)}}  \mathbb{E}_{\P}[q(p) m\left(r-c W(s,q(p)\Lambda)\right)],
\end{aligned}
\end{equation}
where the equivalence follows from the fact that the expected value $\E[\Lambda]=m$ is a known constant as it is part of the information contained in the ambiguity set. To see that \eqref{mppalt} is equivalent to \eqref{mpp}, we carefully go over the decision processes of the firm and the customers. First, the firm sets a price $p$, and then the customers settle on a joining strategy with expected utility $U=0$, while considering all possible distributions that possibly govern the market size $\Lambda$. That is, the customers solve the equation $r=p+\max_{\P\in\cP_{(m,\sigma)}} c\E_\P[W(s,q(p)\Lambda)]$, with the expected value operator appearing
here because of the customers’ internal calculations. In equilibrium, the utility equation can then be rewritten as $p=\min_{\P\in\cP_{(m,\sigma)}}\E_{\P}[r-cW(s,q(p)\Lambda)]$. Now, if the firm sets the price to $p$, the expected revenue will equal this price times the market share, $p\cdot\E_\P[\Lambda_e(p)]=p\cdot\E_\P[q(p)\Lambda]$, yielding \eqref{mppalt}.
In order to solve the utility equation, we must make additional assumptions regarding how customers experience waiting times in a mixed Poisson model. 

The next question then is a rather philosophical one: What to assume for the wait expected by an arbitrary arriving customer? One natural choice is $\mathbb{E}_{\mathbb{P}}[W(s,q\Lambda)]$; see, e.g., \cite{chen2017staffing,chen2020knowledge}. Here, we assume nature picks a realization $\Lambda=\lambda$, and customers from time 0 onward arrive according to a Poisson process with this rate. Customers remain unaware of the specific universe (i.e., event $\Lambda=\lambda$) they live in but hold a (Bayesian) prior belief about the probability of residing in one universe compared to another. Another option is to base the rational decision made upon arrival on the posterior distribution of $\Lambda$, the updated version of the prior distribution of $\Lambda$ that accounts for size bias. That is, the customer uses the conditional distribution of $\Lambda$ given the realization of its own arrival event. \citet{hassin2021strategic} call this phenomenon RASTA (Rate-biased ASTA), as a counterpart of PASTA. For any function $g(\cdot)$, the posterior expectation of $g(\Lambda)$ at arrival instants is $\E[\Lambda g(\Lambda)]/\E[\Lambda]$, and when $g$ is nondecreasing and convex, then
$\E[\Lambda g(\Lambda)]/\E[\Lambda] \geq \E[g(\Lambda)]  \geq g(\E[\Lambda])$. As an application, consider the M/M/$s$ queue. Then, the expected waiting time with a random arrival rate $q\Lambda$ equals 
$$
\overbar{W}(s,q):=\frac{\E[\Lambda W(s, q \Lambda)]}{\E[\Lambda]}=\frac{
\E[q \Lambda W(s, q \Lambda)]}{\E[q \Lambda]}=\frac{
\E[L(s, q \Lambda)]}{\E[q \Lambda]}.
$$
The utility of a customer who joins the queue when all the
other customers use strategy $q$ equals
$$U(q) = r-p- c \overbar{W}(s,q)$$
and the best response of an individual customer is to join if and only if $U(q) \geq  0$. 
When accounting for size bias, we thus need to solve
 \begin{equation}\label{optc1}
 \max_{\mathbb{P}\in \mathcal{P}} \overbar{W}(s,q)
 \end{equation}
to find the worst-case market share.
Alternatively, assuming (P)ASTA instead of RASTA, we have to consider 
\begin{equation}\label{optcx}
\max_{\mathbb{P}\in \mathcal{P}} \mathbb{E}_{\mathbb{P}_\Lambda}[ W(s,q\Lambda)].
\end{equation} 
It is easy to show that \eqref{optc1} is also solved by the distribution in Lemma~\ref{lemma:ec1}(ii), as it is known that $\lambda W(s,\lambda)$ is increasingly convex in $\lambda$ (see \citep{randhawa2016optimality}) and $\E[\Lambda]$ is a known constant since the mean $m$ is contained in the information set. For the RASTA arrival assumption, we assume the market consists of individuals that base their decision on the posterior distribution of $\Lambda$. Then, the worst-case joining probability $q=q^*$ solves the equation
$$
\frac{r-p}c = p_1  \frac{\lambda_1 W(s,q\lambda_1)}{m} + p_2  \frac{\lambda_2W(s,q\lambda_2)}{m},
$$
with $\lambda_1,\ \lambda_2$, $p_1$ and $p_2$ as defined in \textnormal{Theorem~\ref{thmlambda2}}.
For PASTA arrivals, it is assumed the market consists of individuals that base their decision on the prior distribution of $\Lambda$. Then, the worst-case joining probability is found by solving in $q$ the equation
$$
\frac{r-p}c = p_1  {W(s,q\lambda_1)} + p_2  W(s,q\lambda_2),
$$
with $\lambda_1,\ \lambda_2$, $p_1$ and $p_2$ again as defined in \textnormal{Theorem~\ref{thmlambda2}}.
For conciseness, let $\P^*$ denote this extremal distribution. We next determine the optimal price for PASTA arrivals. To accomplish this, we start by deriving an expression for the derivative of the expected waiting time with respect to $\lambda$, using the equations for $L(s,\rho)$ and $L'(s,\rho)$ from \cite{grassmann1983convexity}.
By substituting $\rho=\lambda/(s\mu)$ into the expression for $L'(s,\rho)$ and applying Little's law, the product rule, and the chain rule, we can determine 
$$
M(s,\lambda) := \frac{\partial}{\partial \lambda} W(s,\lambda)  = 
\frac{\partial}{\partial \lambda} \frac{L(s,\lambda)}{\lambda}.
$$

\begin{proposition}[Optimal joining probability, PASTA]\label{prop:optjoinPASTA}
Consider the M/M/$s$ queue with PASTA arrivals. Suppose that $c/\mu<r<c\E_{\P^*}[W(s,\Lambda)]$. Then the joining probability $q^*$ that maximizes the revenue for the firm, in the worst-case market $\Lambda\sim\P\in\cP_{(m,\sigma)}$, is the unique solution in $q$ of
\begin{equation}\label{eq:optjoinstrategyP}
r -c \frac{\E_{\P^*}[\Lambda M(s,q\Lambda)]}{\E_{\P^*}[\Lambda]}=0.
\end{equation}
\end{proposition}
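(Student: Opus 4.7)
The plan is to reduce the firm's maximin problem to a single-variable maximization over the joining probability $q$, and then identify the first-order condition with the asserted equation. First, I would use the PASTA utility equation $r - p = c\E_{\P^*}[W(s, q\Lambda)]$, with $\P^*$ the two-point extremal distribution from Theorem~\ref{thmlambda2}, to solve for the equilibrium price explicitly as $p(q) = r - c\E_{\P^*}[W(s, q\Lambda)]$. A useful observation here is that the same $\P^*$ is extremal for every admissible $q$, since the chain rule preserves the sign of the first three derivatives: if $\lambda \mapsto W(s, \lambda)$ has positive first, second, and third derivatives, then so does $\lambda \mapsto W(s, q\lambda)$ for any fixed $q > 0$, and Lemma~\ref{lemma:ec1}(i) applies uniformly. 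Substituting into the firm's expected revenue $p \cdot q \cdot \E_{\P^*}[\Lambda] = p\,q\,m$ then reduces \eqref{mpp} to the univariate program
\[
\max_{q \,\in\, [0,1]\,\cap\,[0,\,s\mu/\ulambda)} R(q), \qquad R(q) := q\,m\,\bigl(r - c\,\E_{\P^*}[W(s, q\Lambda)]\bigr).
\]

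Next, I would differentiate $R$ with respect to $q$; interchanging differentiation and expectation is legitimate because $\Lambda$ is compactly supported and $\lambda \mapsto W(s, \lambda)$ is smooth on this support. Using the chain rule $\tfrac{\mathrm{d}}{\mathrm{d}q}W(s, q\Lambda) = \Lambda\, M(s, q\Lambda)$, together with the identity $\lambda\, M(s, \lambda) = L'(s, \lambda) - W(s, \lambda)$ that drops out of Little's law $L(s, \lambda) = \lambda W(s, \lambda)$, the stationarity condition $R'(q) = 0$ can be rearranged into the form displayed in~\eqref{eq:optjoinstrategyP}.

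The main obstacle is establishing that this FOC has a \emph{unique} root in the admissible interval and that the root lies strictly in the interior. For uniqueness, I would show that $R$ is strictly concave on the admissible interval: the convexity of $\lambda \mapsto W(s, \lambda)$ established in Section~\ref{subsec:mainlambdaresult} transfers to $q \mapsto \E_{\P^*}[W(s, q\Lambda)]$, making $f(q) := r - c\E_{\P^*}[W(s, q\Lambda)]$ a strictly decreasing and concave function of $q$; a direct second-derivative check then shows that $R(q) = q\,m\,f(q)$ is strictly concave on $q \geq 0$ whenever $f$ is strictly decreasing and concave (since $R''(q) = m\,[2f'(q) + q f''(q)] < 0$). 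Consequently $R'$ is strictly decreasing and the FOC admits at most one zero. For existence and interiority, the stipulated bounds on $r$ do the work: $W(s, 0) = 1/\mu$ and $r > c/\mu$ yield $R'(0) = m(r - c/\mu) > 0$, so $q^* > 0$; the upper bound $r < c\E_{\P^*}[W(s, \Lambda)]$ forces $R(1) = m(r - c\E_{\P^*}[W(s,\Lambda)]) < 0$, which combined with $R(0) = 0$ and strict concavity places $q^* \in (0, 1)$. The stability assumption $\ulambda < s\mu$ finally ensures that the stability constraint $q < s\mu/\ulambda$ is not binding at $q = 1$, so the unique maximizer falls strictly inside the admissible set.
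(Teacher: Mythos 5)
Your argument follows the paper's proof essentially step for step: reduce the maximin problem to the univariate program $\max_q \, qm\bigl(r - c\,\E_{\P^*}[W(s,q\Lambda)]\bigr)$ using the fact that $\P^*$ solves the inner minimization for every admissible $q$, derive the first-order condition by interchanging differentiation and expectation, and obtain uniqueness from concavity of the objective in $q$ together with the assumed bounds on $r$. You merely supply somewhat more detail than the paper on the strict concavity of $R$ and on why the maximizer is interior, which is a useful elaboration of the paper's terser remarks rather than a different route.
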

\begin{proof}
We shall show that the solution to \eqref{eq:optjoinstrategyP} is the joining probability that maximizes
$$
\max_{q \in(0,1) \cap(0, s\mu / \ulambda)} \min_{\P\in\cP_{(m,\sigma)}}  \mathbb{E}_{\P}[q m\left(r-c W(s,q\Lambda)\right)].
$$
It suffices to consider $q\in(0,1) \cap(0, s\mu / \ulambda)$ as, due to the assumption, both $q=0$ and $q=1$ are not equilibrium strategies. It is fairly straightforward to see that the minimization problem is solved by $\P^*$, since the expected waiting time has the required third-order property. We next solve the maximization part. Taking the derivative with respect to $q$, we obtain the first-order condition
$$
rm - c \E_{\P^*}[\Lambda M(s,q\Lambda)]=0,
$$
which can be written as \eqref{eq:optjoinstrategyP}. Notice that we are allowed to interchange the differentiation and expectation operations as $\Lambda$ is bounded. Since $M(s,\cdot)$ is an increasing function of $\lambda$, as $W(s,\cdot)$ is (strictly) convex, this implies that \eqref{eq:optjoinstrategyP} has a unique solution that is the global maximizer, as the objective function is a concave function of $q$.
\end{proof}

We next determine the minimax price $p$ that induces the optimal joining strategy $q^*$.
\begin{proposition}[Optimal maximin price, PASTA]\label{prop:optpricePASTA}
For the M/M/$s$ queue with PASTA arrivals, the optimal price for the firm that solves the maximin problem \eqref{mpp} is
\begin{equation}\label{eq:optminimaxpriceP}
p^*=c \frac{\E_{\P^*}\left[\Lambda M\left(s,q^* \Lambda\right)-mW\left(s,q^* \Lambda\right)\right]}{\E_{\P^*}[\Lambda]}.
\end{equation}
\end{proposition}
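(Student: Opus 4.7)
The plan is to combine the equilibrium (utility) equation that pins down the induced joining probability as a function of the price with the first-order condition from Proposition~\ref{prop:optjoinPASTA}, and then solve the resulting system for $p$. Conceptually, the maximin problem is decomposed into two parts: for a fixed price $p$ the ambiguity-averse customers settle on a joining probability $q(p)$ satisfying $U = 0$, and then the firm picks $p$ to maximize its revenue. The first step already identified the optimal $q^*$ as the solution of $r m = c\,\E_{\P^*}[\Lambda M(s,q\Lambda)]$; what remains is to read off the price that makes customers adopt exactly this $q^*$.

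First, I would argue that the worst-case distribution $\P^*$ coincides with the one identified in Theorem~\ref{thmlambda2}, so that under PASTA the equilibrium condition $U(q) = 0$ becomes
\begin{equation*}
r - p - c\,\E_{\P^*}[W(s,q\Lambda)] \;=\; 0,
\end{equation*}
which can be inverted to give $p(q) = r - c\,\E_{\P^*}[W(s,q\Lambda)]$. This mapping is well defined under the stated interior regime $c/\mu < r < c\,\E_{\P^*}[W(s,\Lambda)]$, which guarantees $q^* \in (0,1) \cap (0,s\mu/\ulambda)$ and rules out the corner strategies.

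Next I would substitute $q = q^*$ into $p(q)$ and eliminate $r$ using the first-order condition from Proposition~\ref{prop:optjoinPASTA}, namely $r = c\,\E_{\P^*}[\Lambda M(s,q^*\Lambda)]/\E_{\P^*}[\Lambda]$ (using that $m = \E_{\P^*}[\Lambda]$ is part of the ambiguity set). A one-line manipulation then yields
\begin{equation*}
p^* \;=\; c\,\frac{\E_{\P^*}[\Lambda M(s,q^*\Lambda)]}{\E_{\P^*}[\Lambda]} \;-\; c\,\E_{\P^*}[W(s,q^*\Lambda)] \;=\; c\,\frac{\E_{\P^*}\!\left[\Lambda M(s,q^*\Lambda) - m W(s,q^*\Lambda)\right]}{\E_{\P^*}[\Lambda]},
\end{equation*}
which is the claimed expression. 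The interchange of expectation and the derivative used to introduce $M$ is legitimate because $\Lambda$ is bounded, exactly as invoked in the proof of Proposition~\ref{prop:optjoinPASTA}.

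The only subtle step is verifying that $p^*$ actually solves the outer maximization rather than merely producing the prescribed equilibrium $q^*$. For this I would rely on the concavity argument already established in Proposition~\ref{prop:optjoinPASTA}: the inner revenue $q\,m\,(r - c\,W(s,q\Lambda))$, in expectation under $\P^*$, is a concave function of $q$ (since $M(s,\cdot)$ is increasing), so $q^*$ is the unique global maximizer of the worst-case revenue. Because the mapping $p \mapsto q(p)$ induced by the equilibrium equation is a strictly monotone bijection on the admissible range, maximizing over $p$ is equivalent to maximizing over $q$, and the unique price that implements $q^*$ is therefore optimal. This monotonicity check is the main obstacle to watch: it relies on $\E_{\P^*}[W(s,q\Lambda)]$ being strictly increasing in $q$, which follows directly from the strict monotonicity and convexity of $W(s,\cdot)$.
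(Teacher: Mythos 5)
Your proof is correct and follows essentially the same route as the paper: both combine the equilibrium condition $p = r - c\,\E_{\P^*}[W(s,q^*\Lambda)]$ with the first-order condition of Proposition~\ref{prop:optjoinPASTA} and use $\E_{\P^*}[\Lambda]=m$ to obtain the stated formula (the paper merely runs the computation backwards, verifying that the given $p^*$ satisfies $U(q^*)-p=0$, whereas you solve forward for $p$). Your closing remark on the strict monotonicity of the map $p\mapsto q(p)$, which makes maximizing over $p$ equivalent to maximizing over $q$, is a small but welcome addition that the paper leaves implicit.
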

\begin{proof}
It is required to show that the right-hand side of \eqref{eq:optminimaxpriceP} yields a solution to $q(p)=q^*$, or equivalently, $U(q^*)-p=0$. Indeed, plugging in the right-hand side of \eqref{eq:optminimaxpriceP}, we see that
$$
\begin{aligned}
U(q^*)-p &= r-c\max_{\P\in\cP_{(m,\sigma)}}\E[{W}(s,q^*\Lambda)]-c \frac{\E_{\P^*}\left[\Lambda M\left(s,q^* \Lambda\right)- m W\left(s,q^* \Lambda\right)\right]}{\E_{\P^*}[\Lambda]} \\
&=r-c\frac{\E_{\P^*}[\Lambda M(s,q\Lambda)]}{\E_{\P^*}[\Lambda]}=0,
\end{aligned}
$$
in which the final identity follows from Proposition~\ref{prop:optjoinPASTA}.
\end{proof}

\subsection{Numerical example}\label{sec:numerics}
We next present numerical results for the M/M/2 model with random market size $\Lambda\sim\P\in\cP_{\Lambda}(1.5,\sigma,0,2)$, $c=1$, $r=2$ and PASTA arrivals. Figure~\ref{fig:optqq} illustrates the equilibrium joining probabilities and revenues for various price values $p$ and different levels of variance $\sigma^2$. As expected, both the joining probabilities and revenues decrease with an increase in the dispersion of the market size. This observation aligns with the intuition that greater variance indicates more uncertainty about the market size and, as a result, greater uncertainty about the expected waiting time. Another noteworthy characteristic of the ambiguous model, as compared to the model with a known arrival rate, is that the equilibrium joining probability cannot exceed a certain level since, otherwise, the M/M/$s$ system can become unstable.

\begin{figure}[h!]
\begin{subfigure}{0.49\textwidth}
    \centering

    \includegraphics[width=\linewidth]{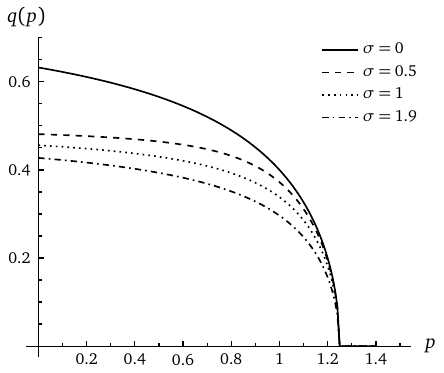}
\end{subfigure}
\begin{subfigure}{0.49\textwidth}
    \centering
    \includegraphics[width=\linewidth]{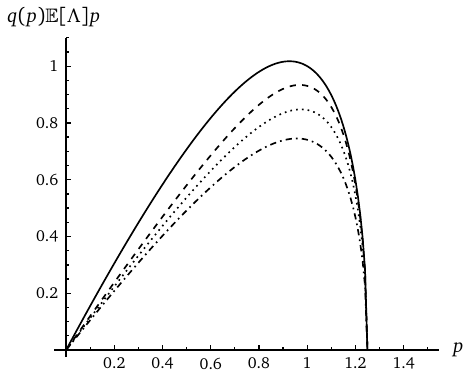}
\end{subfigure}
\caption{Equilibrium probability and revenue for different prices $p$ in the M/M/2 model with random market size $\Lambda\sim\P\in\cP_{\Lambda}(2.5,\sigma,0,4)$, $c=0.75$, $r=2$, and PASTA arrivals.}
\label{fig:optqq}
\end{figure}

In conclusion, we have shown that our second-order bounds can aid in deriving the optimal maximin price for a firm seeking to maximize revenue. This firm serves a rational customer base that makes decisions based on limited information about the overall market size. Interestingly, this adverse market follows a computationally tractable two-point distribution, thus allowing for the application of established techniques designed for rational queueing models with stochastic arrival rates.

\section{Conclusions and further research}\label{MMSsec6}
We have presented \new{tight} second-order bounds for the expected waiting time in an M/M/$s$ queueing system
\new{with an uncertain arrival rate.
We have derived these bounds 
by using the fact that the expected waiting time can be written as the expectation of a convex function with a convex derivative. This convex derivative, in conjunction with mean-variance information, gave rise to a specific semi-infinite linear program with an explicit solution in terms of the two-point worst-case distribution. }


\new{Looking forward, it is worthwhile to search for other information sets that have, just as mean-variance, explicit and perhaps even two-point worst-case distributions.
In Appendix~\ref{semiapp}, we consider semi-variance and show that the worst-case distribution is either a three-point or a two-point distribution with some support points that can only be determined numerically. Hence, while variance leads to a closed-form two-point distribution, semi-variance leads to a less explicit worst-case scenario. The proof in Appendix~\ref{semiapp} is based on solving the semi-infinite linear program with a primal-dual argument and again uses the convex derivative.  }
\new{Several recent papers have considered mean absolute deviation instead of variance, which leads for convex functions to an extremal three-point distribution that, as with variance, no longer depends on the objective function's precise form; see, for example, \cite{postek2018robust,van2022mad}.
}

One might also consider data-driven information sets, such as the Wasserstein ambiguity set, which are equipped to handle limited data, as estimates of means and dispersion measures might not always be statistically accurate enough. Additionally, it is insightful to investigate to which other stochastic systems analogous types of distributionally robust techniques to control for parameter uncertainty can be applied, a topic we briefly touched upon in Section~\ref{sec:furtherapplic}.

\bibliography{arXiv}
\bibliographystyle{apalike}


\begin{appendix}

\section{Proof of Lemma~\ref{lemma:ec1}}
\begin{proof}

From the derivations in \cite{grassmann1983convexity}, we know that the derivative of $L$ can be written as
\begin{equation}\label{eq:Lfirstder}
L'(\rho)=s+\left(\frac{s-L(\rho )+1}{\rho }+\frac{2}{1-\rho }\right) (L(\rho )-s \rho )
\end{equation}
Further differentiating both sides yields
\begin{equation}\label{eq:Lsecondder}
L''(\rho)=(L(\rho )-s \rho ) \left(-\frac{s-L(\rho )+1}{\rho ^2}-\frac{L'(\rho )}{\rho }+\frac{2}{(1-\rho )^2}\right)+\left(\frac{s-L(\rho )+1}{\rho }+\frac{2}{1-\rho }\right) \left(L'(\rho )-s\right)
\end{equation}
and
\begin{equation}\label{eq:Lthirdder}
\begin{aligned}
L'''(\rho)=\left(\frac{s-L(\rho )+1}{\rho }-\frac{2}{\rho -1}\right) L''(\rho )+2 \left(\frac{-s+L(\rho )-1}{\rho ^2}-\frac{L'(\rho )}{\rho }+\frac{2}{(\rho -1)^2}\right) \left(L'(\rho )-s\right) \\+  (L(\rho )-s \rho ) \left(\frac{2 (s-L(\rho )+1)}{\rho ^3}-\frac{L''(\rho )}{\rho }+\frac{2 L'(\rho )}{\rho ^2}-\frac{4}{(\rho -1)^3}\right).
\end{aligned}
\end{equation}
By Little's law, $W(\rho)=L(\rho)/(\rho s\mu)$. The third derivative of the expected waiting time is then given by
\begin{equation}\label{eq:thirdder}
    W'''(\rho)=\frac{\partial^3}{\partial \rho^3}\frac{L(\rho)}{\rho s\mu} =\frac{\rho  \left(6 L'(\rho )+\rho  \left(\rho  L'''(\rho )-3 L''(\rho )\right)\right)-6 L(\rho )}{s \mu  \rho ^4}.
\end{equation}
To prove the claim, we shall show that $W'''(\rho)\geq0$. Consecutively substituting  \eqref{eq:Lthirdder}, \eqref{eq:Lsecondder}, \eqref{eq:Lfirstder} and finally \eqref{eq:L0thder} into \eqref{eq:thirdder}, we obtain
\begin{equation}
\begin{aligned}
W'''(\rho)=\frac{C}{s \mu  (1-\rho)^4 \rho ^3} \Big(-s^2 (1-\rho )^4 &((7 C-6) \rho +3)+2 s (1-\rho)^2 (\rho  (2 C ((3 C-7) \rho +1)+9 \rho -4)+1)  \\  &-6 (C-2) ((C-2) C+2) \rho ^3+s^3 (1-\rho)^6\Big).
\end{aligned}
\end{equation}
Since
$
\frac{C}{s \mu  (1-\rho)^4 \rho ^3}
$
is nonnegative, it suffices to show that 
$$
\begin{aligned}
f(s,C):=\Big(-s^2 (1-\rho )^4 &\left((7 C-6) \rho +3 \right) + 2 s (1-\rho)^2 (\rho  (2 C ((3 C-7) \rho +1)+9 \rho -4)+1) \\ &-6 (C-2) ((C-2) C+2) \rho ^3+s^3 (1-\rho)^6 \Big) \geq 0.
\end{aligned}
$$
To simplify some of the terms of $f(s,C)$, it is convenient to work with bounds for $C$ in the remainder of the proof. We will use the simple bounds (i) $C\leq\rho$ and (ii) $C\leq1+\frac{s(1-\rho)^2}{2 \rho}-\frac{(1-\rho)}{2 \rho} \sqrt{4 s \rho+s^2(1-\rho)^2}$ (see, e.g., \citep{harel2010sharp}). It suffices to show for all $s\geq2$ that $f(s,C(\rho))\geq0,\ \forall\rho\in(0,1),$  since the result is already demonstrated for the single-server queue. We proceed by showing that $f(n,C)$ is nondecreasing for $2\leq n\leq s$. Then, to complete the proof, it remains to show that $f(2,C)\geq0$ for all $\rho\in(0,1)$.
Now notice that
$$
g(x) := \frac{1}{(1-\rho)^2}\frac{\partial}{\partial n}f(n,x) = a_2x^2-a_1 x+a_0, 
$$
where
$$
\begin{aligned}
    a_2 &= 12 \rho^2, \\
    a_1 &= 2\rho\left(7 n (1-\rho)^2 + 14 \rho  -2\right), \\
    a_0 &= 18 \rho^2-8 \rho +3 (1-\rho )^4 n^2-6 (1-\rho )^2 n+12 \rho  (1-\rho )^2 n+2. \\
\end{aligned}
$$
After some algebra, one sees that, for $n\geq2$, $a_0,a_1\geq0$, and clearly, $a_2=12\rho^2>0$. Since $a_0,a_1,a_2\geq0$, $g(x)$ has two positive roots. Denote the smaller root by $x^{-}$. Now, to show $g(C)\geq0$, we demonstrate that $g'(C)\leq g'(x^{-})$, which is sufficient as $g(x)$ is a convex quadratic function. We will instead prove this inequality for an upper bound on $C$:
$$
C\leq 1+\frac{n(1-\rho)^2}{2 \rho}-\frac{(1-\rho)}{2 \rho}\sqrt{4 n \rho+n^2(1-\rho)^2} =:\Bar{C}
$$
where the inequality follows from (ii) and $n\leq s$.
We next compute $g'(\bar{C})$ and $g'(x^{-})$, and show that $g'(x^{-}) - g'(\bar{C})$ is nonnegative. Demonstrating $g'(\bar{C})\leq g'(x^{-})$ is sufficient since $g$ is a quadratic decreasing function on $[0,\Bar{C}]$. Notice that
$$
\begin{aligned}
    g'(x^{-}) &= -2 \rho(1-\rho) \sqrt{ \left(n \left(52 \rho +13 (1-\rho)^2 n+44\right)-20\right)}\\
    g'(\bar{C}) &= -2 \rho(1-\rho )  \left((1-\rho) n+6 \sqrt{n \left(4 \rho +(1-\rho)^2 n\right)}-2\right).
\end{aligned}
$$
Since $g'(x^{-}),g'(\bar{C})<0$, it is sufficient to show $\frac{|g'(x^{-})|}{2\rho(1-\rho)}\leq\frac{|g(\bar{C})|}{2\rho(1-\rho)}$ by demonstrating nonnegativity of
$$
\begin{aligned}
\bar{g}(n,\rho)&:=\left(\frac{g(\bar{C})}{2\rho(1-\rho)}\right)^2-\left(\frac{g'(x^{-})}{2\rho(1-\rho)}\right)^2 \\&= \left(6 \sqrt{n \left(4 \rho +(1-\rho )^2 n\right)}+(1-\rho)n-2\right)^2-n \left(52 \rho +13 (1-\rho)^2 n+44\right)+20.
\end{aligned}
$$
After some tedious calculations, it follows from standard calculus that $\frac{\partial \bar{g}}{\partial n}\geq0$, for $n\geq2$. So, for fixed $\rho$, the auxiliary function $\bar{g}(\cdot,\rho)$ is minimized at $n=2$. Hence,
$$
\bar{g}(n,\rho) \geq \bar{g}(2,\rho) = 96 \rho^2 - 48\rho\sqrt{1+\rho^2} +24 \geq 17.5692 >0,
$$
for all $\rho\in(0,1)$. Here the second-to-last inequality follows from minimizing $96 \rho^2 - 48\rho\sqrt{1+\rho^2} +24 $.
Therefore, $g(C)\geq0$ or, equivalently, $\frac{\partial}{\partial n}f(n,C)\geq0$ for $2\leq n\leq s$.
In the remainder of the proof, it thus suffices to concentrate on $f(2,C)$. Define
$$
\begin{aligned}
h(x):=f(2,x)=\Big(-4(1-\rho )^4   &\left((7 x-6) \rho +3 \right) + 4 (1-\rho)^2 (\rho  (2 x ((3 x-7) \rho +1)+9 \rho -4)+1) \\ &-6 (x-2) ((x-2) x+2) \rho ^3+ 8(1-\rho)^6 \Big).   
\end{aligned}
$$
Observe that
$$
h''(x) = 12 \rho^2 (\rho (4 \rho-3 x-4)+4).
$$
From the well-known bound (i), it follows that $h''(x)\geq 0$ for $x\leq\rho$ since $h''(\rho)=12(\rho-2)^2\rho^2\geq0$. Further, it can be shown that
$$
h'(\rho) = 2 \rho (\rho^4 + 4 \rho^3 - 18 \rho^2 + 20 \rho - 10) \leq0,\ \forall\rho\in(0,1).
$$
Since $h''(x)\geq 0$, $h'(x)\leq0$ for all $x\leq\rho$. Hence, for $\rho\in(0,1)$,
$$
f(2,C)=h(C)\geq h(\rho) = -2 \rho^2 ((\rho - 2) \rho ((\rho - 2) \rho + 2) - 2)\geq0,
$$
in which the final inequality follows from some straightforward calculus. This completes the proof.
\end{proof}

\section{Semi-variance as information set}\label{semiapp}
We next consider a specific asymmetric dispersion measure that only measures dispersion above the mean.
Let $\cP_{(m,\semivar)}:=\mathcal{P}(m,\semivar,\llambda,\ulambda)$ be the set of all distributions with mean $m$, (upper) semivariance $\semivar:=\E[(X-\mu)^+]^2$, where $(x)^+=\max\{x,0\}$, and the support contained in the interval $[\llambda,\ulambda]$. Using primal-dual techniques, we establish the following result.

\begin{theorem}[M/M/$\boldsymbol{s}$ queue with semivariance information]\label{thmlambdasemi}
Consider an {M/M/$s$} queue with random arrival rate $\Lambda$ that follows a distribution $\mathbb{P}$ belonging to the ambiguity set  $\mathcal{P}(m,\semivar,\llambda,\ulambda)$. Suppose that $m\in(\llambda,\ulambda)$ and $\semivar\in(0,\frac{(\ulambda-m)^2(m-\llambda)}{(\ulambda-\llambda)})$. Then, the tight upper bound for the expected waiting time 
corresponds with the maximum value of $\mathbb{E}_{\mathbb{P}}[{W}(s,\Lambda)]$ that results from the following two solutions:
\begin{itemize}
\item[(i)] the expected waiting time with the expectation taken over a three-point distribution with 
$$
\mathbb{P}(\Lambda=\llambda )=p_1(x^*_0),\ \mathbb{P}(\Lambda=x^*_0)=p_2(x^*_0)  ,\ \mathbb{P}(\Lambda=\ulambda)=p_3(x^*_0),
$$
where $x^*_0\in[\mu,\ulambda)$ solves 
$$
\max_{x_0\in[\mu,\ulambda)} p_1(x_0) W(s,\llambda) + p_2(x_0)  W(s, x_0) + p_3(x_0) W(s,\ulambda)\  \textnormal{ s.t. }\  0\leq p_1,p_2,p_3\leq1;  \textnormal{ or }
$$

\item[(ii)] the expected waiting time with the expectation taken over the two-point distribution
$$
\begin{aligned}
\mathbb{P}(\Lambda=\llambda )&=\frac{\sqrt{4\semivar (m-\llambda)^2+\semivar^2}-\semivar }{2 (m-\llambda)^2},\ \mathbb{P}(\Lambda=x^*_0)=\frac{2 (m-\llambda)^2}{\sqrt{4\semivar  (m-\llambda)^2+\semivar^2 }+2 (m-\llambda)^2+\semivar }  ,\\ \textnormal{ where } x^*_0 &= \frac{2 m (m-\llambda) + \sqrt{4\semivar  (m-\llambda)^2+\semivar^2}+\semivar }{2 (m-\llambda)}.
\end{aligned}
$$

\end{itemize}
\end{theorem}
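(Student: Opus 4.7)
The plan is to extend the primal-dual framework developed in Lemma~\ref{lemma:cs} and Proposition~\ref{propbirge} to a setting where the semivariance constraint replaces the variance constraint. First I would write down the primal semi-infinite LP
\begin{equation*}
\max_{\P\in \cP_0([\llambda,\ulambda])} \int W(s,\lambda)\, \d\P(\lambda) \quad \text{s.t.} \quad \int \d\P = 1,\ \int \lambda\, \d\P = m,\ \int ((\lambda-m)^+)^2\, \d\P = \semivar,
\end{equation*}
together with its Lagrangian dual, which minimizes $\pi_0+\pi_1 m + \pi_2\semivar$ subject to $M(\lambda):=\pi_0+\pi_1\lambda+\pi_2((\lambda-m)^+)^2 \geq W(s,\lambda)$ for all $\lambda\in[\llambda,\ulambda]$. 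The distinctive feature here, compared to Proposition~\ref{propbirge}, is that $M$ is piecewise: affine on $[\llambda,m]$ and a genuine quadratic polynomial on $[m,\ulambda]$. Under the stated interior conditions on $(m,\semivar)$, the same Shapiro-type argument as in Lemma~\ref{lemma:cs} delivers strong duality and attainment of both optima, so complementary slackness forces any optimal $\P^*$ to be supported on the coincidence set $\{\lambda : M^*(\lambda)=W(s,\lambda)\}$.

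I would then analyze the two subintervals separately. On $[\llambda,m]$, where $M^*$ is affine and $W(s,\cdot)$ is strictly convex, the nonnegative difference $M^*-W(s,\cdot)$ can vanish at most at the two boundary points, so the candidate support within this piece is contained in $\{\llambda,m\}$. On $[m,\ulambda]$, where $M^*$ is quadratic, I would invoke the same reasoning as in Proposition~\ref{propbirge}, which relies on the convex derivative of $W(s,\cdot)$ established in the lemma of Section~\ref{subsec:mainlambdaresult}: a dual-feasible quadratic majorant of a function with convex derivative can coincide with that function in at most two points, namely an interior tangency point $x_0^*\in[m,\ulambda)$ together with possibly the right endpoint $\ulambda$. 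A careful check of the matching condition at the break point $\lambda=m$, where $M$ is continuous but its one-sided derivatives differ, ensures these tangency counts add consistently.

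Combining the two analyses, the candidate support of $\P^*$ is a subset of $\{\llambda,m,x_0^*,\ulambda\}$ with $x_0^*\in[m,\ulambda)$. Since $\semivar>0$ rules out distributions concentrated at or below $m$, and since the three active moment constraints require at least three degrees of freedom in the configuration, the feasible configurations reduce to exactly two: the three-point distribution on $\{\llambda,x_0^*,\ulambda\}$ of case (i) and the two-point distribution on $\{\llambda,x_0^*\}$ of case (ii); a putative mass at $m$ would contribute nothing to the semivariance and can always be absorbed into the other points. In case (ii) the three moment equations determine $x_0^*$, $p_1$, $p_2$ in closed form by elementary algebra, yielding the explicit formulae in the statement. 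In case (i) the three moment equations leave one degree of freedom, which is pinned down by maximizing the objective over $x_0^*\in[\mu,\ulambda)$ subject to $p_1,p_2,p_3\in[0,1]$; these probabilities are the unique solution of the linear moment system for each fixed $x_0^*$. The tight upper bound is then the larger of the two candidate objective values.

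The main obstacle I anticipate is the tangency analysis on the right subinterval under the piecewise dual function, in particular ruling out configurations with an interior support point strictly in $(\llambda,m)$ and handling the kink of $M$ at $\lambda=m$ in the complementary-slackness argument. A secondary technical hurdle is that, in contrast to the variance case, the interior tangency point $x_0^*$ in case (i) is not available in closed form, so the proof must characterize it only implicitly as the maximizer of a one-dimensional problem.
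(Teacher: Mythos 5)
Your proposal follows essentially the same route as the paper's proof: the same primal--dual pair with the piecewise (affine-then-quadratic) majorant, strong duality and complementary slackness via Shapiro's results, exclusion of interior support points on $[\llambda,m]$ by linearity of $M$ against strict convexity of $W(s,\cdot)$, the convex-derivative argument limiting the coincidence set on $[m,\ulambda]$ to one tangency point plus $\ulambda$, and finally the same two candidate configurations with the maximum taken over them. One small remark: the kink you anticipate at $\lambda=m$ does not actually arise, since $\frac{\d}{\d x}((x-m)^+)^2=2(x-m)^+$ is continuous, so $M_{\semivar}$ is $C^1$ there and no extra matching condition is needed.
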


\begin{proof}
The primal problem for mean-(upper)semivariance information is given by
\begin{equation}\label{test5}
\begin{aligned}
&\max_{\P(x)\geq0} &  &\int_{\llambda}^{\ulambda} \phi(x){\rm d} \P(x)\\
&\text{s.t.} &      & \int_{\llambda}^{\ulambda} {\rm d}\P(x)=1,\  \int_{\llambda}^{\ulambda} x\,{\rm d}\P(x)=m,\ \int_{\llambda}^{\ulambda}((x-m)^+)^2 {\rm d}\P(x)=\semivar,   
\end{aligned}
\end{equation}
and admits the following dual:
\begin{equation}\label{test6}
\begin{aligned}
&\min_{\pi_0,\pi_1, \pi_2} &  &\pi_0 +\pi_1 m+\pi_2 \semivar\\
&\text{s.t.} &      & M_{\semivar}(x):=\pi_0 +\pi_1 x +\pi_2((x-m)^+)^2 \geq \phi(x), \ \forall x\in[\llambda,\ulambda].
\end{aligned}
\end{equation}

The objective function $\phi(\cdot)$ is increasingly convex since it represents $W(s,\cdot)$. Under the conditions imposed on the parameters of the ambiguity set, strong duality holds and the optimal values of the primal and dual problem coincide. In addition, since the common optimal value is finite, the dual optimal solution is attained \citep[Proposition~3.4]{shapiro2001duality}. Moreover, as both the objective function and the dual function $M_{\semivar}(\cdot)$ are continuous and the support $[\llambda,\ulambda]$ is compact, the optimal primal solution is also attained \citep[Corollary~3.1]{shapiro2001duality}. As a consequence, complementary slackness holds \cite[Proposition~2.1]{shapiro2001duality}. 
We next use the complementary slackness property and structural properties of the functions $\phi$ and $M_{\semivar}$ to determine which points should constitute the support of the extremal distribution. Since the primal problem has three constraints, we can restrict our search to a worst-case distribution with at most three support points \citep{rogosinski1958moments}. Since the conditions in the theorem impose $\semivar>0$, we can readily exclude the case of a one-point worst-case distribution.  Furthermore, we need both a point below and above the mean to satisfy the mean condition and to avoid the semivariance being equal to zero. For the support point below the mean, $\llambda$ is the only dual-feasible option by convexity of $\phi(\cdot)$, as $M_{\semivar}(\cdot)$ is linear for all $x\in[\llambda,m]$. We next consider two dual solutions that correspond with cases (i) and (ii) stated in the theorem, which assert a worst-case two- and three-point distribution, respectively.

\begin{figure}[h]
\begin{subfigure}{.49\linewidth}
\centering
\includegraphics[width=.75\linewidth]{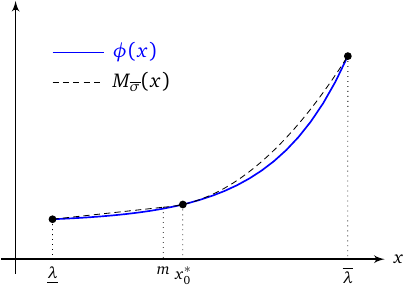}
\end{subfigure}
\begin{subfigure}{.49\linewidth}
\centering
\includegraphics[width=.75\linewidth]{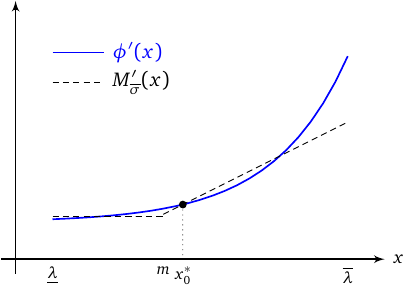}
\end{subfigure}
\caption{Objective function $\phi$, dual function $M_{\semivar}$ and their derivatives, where $M'_{\semivar}$ is interpreted as the right-derivative}
\label{figsemi}
\end{figure}

(i) Fixing the first support point to $\llambda$, we next seek the other two points using another complementary slackness argument. The dual function $M_{\semivar}$ can only be tangent to $\phi$ at one unique point $x_0$ on the interval $[m,\ulambda]$. This follows from linearity of $M_{\semivar}'(\cdot)$ and convexity of $\phi'(\cdot)$, as illustrated in Figure~\ref{figsemi}, which rules out the possibility of a second tangent point. To see this, assume that there exists a second tangent point $\tilde{x}_0$. To remain dual-feasible, the dual function has to satisfy $M_{\semivar}'(x)\leq\phi'(x)$ for $x\uparrow\tilde{x}_0$, and $M_{\semivar}'(x)\geq\phi'(x)$ for $x\downarrow \tilde{x}_0$. Therefore, $M_{\semivar}'(\cdot)$ has to intersect $\phi'(\cdot)$ from below at $\tilde{x}_0$, but since this already occurred at $x_0^*$, this cannot happen a second time as otherwise $M_{\semivar}'(\cdot)$ has to be nonlinear or $\phi'(\cdot)$ nonconvex. Hence, we arrive at a contradiction. The dual function can only coincide with $\phi(\cdot)$ one more time, at the upper bound of the support, $x=\ulambda$. Therefore, if the worst case is given by a three-point distribution, then it has to admit this specific form as a result of complementary slackness. The probabilities, as functions of $x_0$, follow from solving the moment constraints in \eqref{test5}. As $x_0$ is yet to be determined, the maximum value follows from solving the univariate optimization problem stated in the claim. There always exists a feasible solution to this optimization problem. Specifically, letting $x_0=m$, we obtain the three-point distribution
$$
p_1=\frac{\semivar}{(\ulambda-m)(m-\llambda)},\ p_2= 1-\frac{\semivar(\ulambda-\llambda)}{(m-\llambda) (\ulambda-m)^2},\ p_3=\frac{\semivar }{(\ulambda-m)^2},
$$
of which is easily verified, using the conditions in the claim, that it is primal feasible for all possible parameter combinations.

(ii) We next consider the two-point solution. If the dual function only coincides with $\phi$ at $\llambda$ and some $x_0$, then the resulting two-point distribution can be directly derived from the moment conditions
$$
p_1  + p_2  = 1,\ p_1 \llambda + p_2 x_0^* = m,\  p_2 (x_0^*-m)^2 = \semivar,
$$
which is a system of three equations with three unknowns with a unique solution, as stated in the claim. Further, from the assumptions in the claim, it follows that this distribution is always feasible in the primal. 
Taking the maximum expected value of the assertions (i) and (ii) completes the proof of the claim.
\end{proof}


\end{appendix}

\end{document}